\newcolumntype{P}[1]{>{\centering\arraybackslash}p{#1}}
\DeclareMathOperator*{\spn}{span}
\newcommand{\muv}{\boldsymbol{\mu}}
\newcommand{\xiv}{\boldsymbol{\xi}}
\newcommand{\bx}{\mathbf{x}}
\newcommand{\Omegaref}{\tilde{\Omega}}
\newtheorem{lemma}{Lemma}[section]
\newtheorem{proposition}{Proposition}[section]
\newtheorem{assumption}{Assumption}[section]
\theoremstyle{remark}
\appto{\bibsetup}{\sloppy}
\title{Handling geometrical variability in nonlinear reduced order modeling through Continuous Geometry-Aware DL-ROMs}
\begin{document}

\author[1]{Simone Brivio}
\author[1]{Stefania Fresca}
\author[1]{Andrea Manzoni}
\affil[1]{\normalsize MOX, Department of Mathematics, Politecnico di Milano, Milan, Italy}

\date{}
\maketitle
\vspace{-1.2cm}
\begin{center}
{\small \{\texttt{simone.brivio}, \texttt{stefania.fresca}, \texttt{andrea1.manzoni}\} \texttt{@polimi.it}}
\end{center}

\begin{abstract}
\noindent Deep Learning-based Reduced Order Models (DL-ROMs) provide nowadays a well-established class of accurate surrogate models for complex physical systems described by parametrized PDEs, by nonlinearly compressing the solution manifold into a handful of latent coordinates. Until now, design and application of DL-ROMs mainly focused on physically parameterized problems. Within this work, we provide a novel extension of these architectures to problems featuring geometrical variability and parametrized domains, namely, we propose Continuous Geometry-Aware DL-ROMs (CGA-DL-ROMs). In particular, the space-continuous nature of the proposed architecture matches the need to deal with \textit{multi-resolution} datasets, which are quite common in the case of geometrically parametrized problems. Moreover, CGA-DL-ROMs are endowed with a strong inductive bias that makes them aware of geometrical parametrizations, thus enhancing both the compression capability and the overall performance of the architecture. Within this work, we justify our findings through a thorough theoretical analysis, and we practically validate our claims by means of a series of numerical tests encompassing physically-and-geometrically parametrized PDEs, ranging from the unsteady Navier-Stokes equations for fluid dynamics to advection-diffusion-reaction equations for mathematical biology.
\end{abstract}
\begin{flushleft}
    \textbf{Keywords:} Scientific Machine Learning, Deep Learning, Reduced Order Modeling, Geometrical Variability
\end{flushleft}
  
\section{Introduction}
Representing one of the most widely used tools to describe complex physical systems from a mathematical viewpoint, Partial Differential Equations (PDEs) are nowadays of utmost importance in applied sciences and engineering. %, thus finding application in diverse areas ranging from life sciences to finance and structural mechanics.
However, since their analytical solution is almost never available, several efforts were directed in the last decades towards the development of accurate numerical methods that strive to approximate the solution of PDEs by means of Full Order Models (FOMs). These latter might involve the discretization of the underlying governing equations on a mesh of step size $h >0$, thus yielding $N_h$ degrees of freedom (dofs). We emphasize that FOMs feature an outstanding accuracy, however requiring substantial computational resources, for instance when dealing with time-dependent problems, nonlinear terms, and in the case where the dimension $N_h$ of the corresponding algebraic problem is too large. This latter represents a major drawback, especially when dealing with \textit{many query} applications involving parametric PDEs -- like, e.g., uncertainty quantification and optimal control -- which entail the repeated solution of the numerical problem for different parameters' values.

\paragraph{ROMs: an historical perspective.} In the last decades, the need of \textit{real-time} simulations in \textit{many-query} frameworks paved the way to the development of Reduced Order Models (ROMs), which provide fast and efficient alternatives to FOMs while retaining the essential features of the underlying physical problem. Specifically, ROMs were originally proposed for problems depending on a set of physical parameters $\muv \in \mathcal{P}$ -- and possibly the time variable $t \in \mathcal{T}$ -- where $\mathcal{P}$ and $\mathcal{T}$ are compact finite-dimensional sets. Among the most adopted ROM techniques, we mention the reduced basis (RB) method \cite{quarteroni2016redb,benner2017model,benner2020model_1,benner2020model_2}, which seeks a low-dimensional representation of the solution manifold through linear subspaces. However, we mention that the trial manifold employed in a RB method is global and linear: for this reason, it might reproduce inefficiently the local features and the nonlinearities in the solution manifold. As a matter of fact, the computational speedup of RB methods (with respect to the corresponding FOM) is negligible in the case of real-world applications involving nonlinear and non-affine terms. More recently, the limitations of classical RB methods paved the way to the development of deep learning-based alternatives that include, but are not limited to, Neural Operators \cite{kovachki2023neural, lu2021learning, li2020fourier, wen2022ufno, lanthaler2023nonlocal, li2020multipole, li2020neural, cai2021deep, jin2022mionet}, Deep Learning-based ROMs (DL-ROMs) \cite{fresca2021comprehensive, fresca2022poddlrom, brivio2024ptpidlroms, pant2021deep, mucke2021reduced} and transformer architectures \cite{hao2023gnot, li2023transformer}. Both Neural Operators and DL-ROMs are equipped with a strong theoretical background that backs up their outstanding approximation capabilities \cite{lu2022comprehensive, lanthaler2022error, lanthaler2023nonlocal, kovachki2023neural, brivio2024error, franco2023deep}. On the other hand, even though transformers and attention-based mechanisms \cite{kissas2022learning,hemmasian2024multiscale,hao2023gnot} provide state-of-the-art prediction accuracy and grant us the possibility to tackle large datasets, they are less interpretable and not fully understood from a theoretical viewpoint as only some architectures are supplied with a comprehensive theoretical study \cite{cao2021choose,prasthofer2022variableinputdeepoperatornetworks}. Nonetheless, we stress that Neural Operators often consist of complex models featuring a high-dimensional latent space; conversely, DL-ROMs both accurately reproduce the variability of the solution manifold and entail a profound nonlinear dimensionality reduction, ultimately resolving the underlying dynamics into a handful of latent coordinates.

\paragraph{Surrogate models for geometrically parameterized problems.} Recently, many authors' focus shifted towards the development of ROMs for challenging differential problems characterized by geometrical parameters and parametric domain shapes. For instance, RB methods were extended to cope with geometrically parametrized problems \cite{rozza2008reduced,manzoni2012reduced, manzoni2012shape, lassila2010parametric, karatzas2022advanced, shah2020discontinuous, ballarin2016fast}. Alternatively, more efficient solutions were proposed in the context of deep learning-based ROMs. 
A first class of strategies involves the coupling of proper orthogonal decomposition (POD) and neural networks \cite{siena2023data}. However, the discrete nature of the approach limits the range of possible applications to diffeomorphic meshes showing the same resolution.
Other possible approaches make use of operator learning paradigms in infinite-dimensional function spaces, which are more suitable for parametric domains since they are mesh-agnostic.
For example, one may consider GNN approaches in an operator learning framework \cite{lotzsch2023learning,franco2023geom,pfaff2021learning}, which are very expressive and capable to reproduce the local features of the solution field with high precision. However, GNNs are data eager, that is, they need a large amount of training data to achieve a suitable accuracy on the test set. Moreover, their graph-based architecture entails a large memory footprint in the case of complex problems comprising high-dimensional data. 
Among other techniques, domain practitioners can take advantage of Implicit Neural Representations (INR), a class of approaches which is still at its infancy for applications involving PDEs and therefore is not equipped with theoretical results. We also remark that INR-based strategies such as \cite{serrano2023operator} may require inference-time optimization and are therefore expensive to evaluate in a \textit{many-query} context. For the sake of completeness, we note that some authors crafted suitable transformer-based models to deal with geometrically parametrized problems \cite{li2023transformer, hao2023gnot}. However, these strategies often involve complex and heavy architectures, and are also data eager, so that their accuracy is particularly sensitive to the number of available input-output pairs.
More remarkably, several approaches successfully adapted Neural Operators to handle geometrical parametrizations \cite{li2022fourier, li2023geometryinformed, tran2023factorized, liu2023domain}: the resulting frameworks although extending existing architectures, lack an inductive bias tailored for geometrically parametrized problems. For this reason, even though they provide strikingly accurate approximations, they often feature large neural network architectures, which are prone to a slow training and evaluation. On the other hand, DL-ROMs (and their extensions) usually entail smaller architectures, however they were originally conceived in the context of physically parametrized problems and they were never adapted to problems featuring geometrical parameters.

\paragraph{Main contributions and work outline.} The main contribution of our work consists in the extension of the classical DL-ROM architecture to parametric PDEs featuring geometrical variability. Specifically:
\begin{itemize}
    \item we justify the need of devising a space-continuous, infinite-dimensional paradigm to deal with \textit{multi-resolution} datasets encountered in problems featuring geometrical variability;
    \item we propose Continuous Geometry-Aware DL-ROMs (CGA-DL-ROMs), a comprehensive learning framework to construct DL-ROMs for geometrically parameterized problems, which {\em (i)} is based on a space-continuous formulation to account for \textit{multi-resolution} datasets, and {\em (ii)} leverages on geometry-aware basis functions that represent a major component yielding the notable approximation capabilities of the proposed architecture;
    \item we analyze the proposed framework, assessing its performance against suitable baselines both in light of the theory and by means of involved numerical experiments. 
\end{itemize}
The present work is organized as follows. In \S \ref{sec:formulation} we describe the general formulation of problems featuring geometrical variability; on top of that, in \S \ref{sec:infinite-dimensional-setting} we characterize CGA-DL-ROMs for geometrically parameterized problems. Moreover, in \S \ref{sec:modeling-basis-functions}  we describe the main features of CGA-DL-ROMs, focusing on the analysis of their peculiar geometry-aware basis functions, showing how to parametrize such basis functions through neural networks. We report in \S \ref{sec:numerical-experiments} a series of numerical experiments that support our theoretical findings on different benchmark problems, and assess the generalization capabilities on a series of challenging applications. Finally, we draw our conclusions and provide some insights on possible future developments.

\section{Geometrically parametrized PDEs: formulation \& properties}
\label{sec:formulation}
Within this work, we consider PDEs parametrized by a set of physical parameters, namely $\muv \in \mathcal{P} \subset \mathbb{R}^p$, and a set of geometrical parameters, denoted by $\xiv \in \mathcal{G} \subset \mathbb{R}^g$ for some $p,g \ge 0$:
\begin{equation}
    \label{eq:general_formulation}
    \left\{
    \begin{array}{rll}
        \partial_t u + \mathcal{N}[\muv,\xiv](u) &= 0, \qquad & \mbox{in} \ \Omega(\xiv) \times (0,T] \\
        \mathcal{B}[\muv,\xiv](u) &= 0,  \qquad & \mbox{on} \ \partial{\Omega}(\xiv) \times (0,T] \\
        u(0, \muv, \xiv) - u_0(\muv, \xiv) &= 0, \qquad& \mbox{in} \  \Omega(\xiv),
    \end{array}
    \right. 
\end{equation}
where $T > 0$; here $\mathcal{N}$ is a generic nonlinear operator,  $\mathcal{B}$ enforces the boundary conditions, and $u_0(\muv, \xiv)$ denotes the initial data. For the sake of the well-posedness of the problem, hereon we consider $\mathcal{P}$ and $\mathcal{G}$ to be compact subsets of $\mathbb{R}^p$ and $\mathbb{R}^g$, respectively. Then, we define $\mathcal{T} = [0,T]$ as the time domain. We emphasize that the domain shape depends on a set of parameters, that is, $\Omega = \Omega(\xiv) \subset \mathbb{R}^d$, for $d > 0$: we refer the reader to Fig. \ref{fig:diffeomorphism} for a visualization of an example of parametric domains. Note that the domain shape, while varying by changing $\xiv$, is fixed in time. Then, we highlight that the solution field $u =u(\bx; t, \muv, \xiv) \in \mathbb{R}$ depends on both physical and geometrical parameters, as well as on time, for any $\bx \in \Omega(\xiv)$. We also remark that, for the sake of simplicity, in this section we assume that the PDE solution is a scalar field, although the generalization of our results to vector problems is straightforward -- note that we will show numerical results related with Navier-Stokes equations at the end of the paper.

\begin{figure}[htb!]
    \centering
    \includegraphics[width=0.65\textwidth]{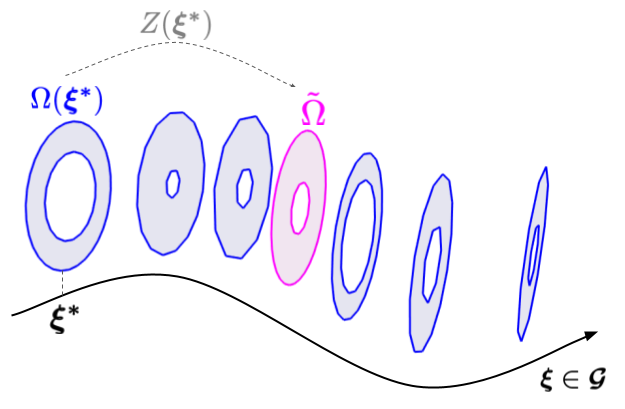}
    \caption{Visualization of a set of parametric domains obtained through a diffeomorphism $Z(\cdot)$; here the geometrical parameter $\xi$ regulates the radius of the circular hole included in each domain.}
    \label{fig:diffeomorphism}
\end{figure}

\subsection{Abstract setting}
We propose in this section an abstract setting for the analysis of the theoretical properties and the well-posedness of problem \eqref{eq:general_formulation}. The main difficulty of the aforementioned analysis originates from the fact that, for any instance $\xiv \in \mathcal{G}$ of the geometrical parameter, $u(t,\muv,\xiv)$ refers to different (parametrized) domains $\Omega(\xiv)$. In this respect, in the following we show that it is possible to perform a change of variables to cast the differential problem entailed by \eqref{eq:general_formulation} in a pre-defined reference configuration $\Omegaref$ that does not depend on the geometrical parameter $\xiv$. To do that, we introduce a special mapping that characterizes the deformation from the original configuration $\Omega(\xiv)$ onto the reference domain $\Omegaref$ (and vice versa), stating the following assumption.

\begin{assumption}
\label{assumption:diffeomorphism}
    Let us denote by $\tilde{\xiv} \in \mathcal{G}$ a reference value for the geometrical parameters, and define $\Omegaref = \Omega(\tilde{\xiv})$ as the reference domain. Then, there exists $(\bx,\xiv) \mapsto Z(\bx;\xiv)$ which is a $C^r$-diffeomorphism ($r\ge1$) as function of $\bx \in \mathbb{R}^d$ and Lipschitz as function of $\xiv \in \mathcal{G}$, such that:
    \begin{equation*}
        \Omega(\xiv) = \{\bx = Z^{-1}(\tilde{\bx};\xiv) \in \mathbb{R}^d \textnormal{, for any } \tilde{\bx}\in \Omegaref\}, \qquad \forall \xiv \in \mathcal{G}.
    \end{equation*}
    We also denote by $\zeta(\bx; \xiv) := |\nabla_{\bx} Z(\bx;\xiv)|$ the Jacobian of the transformation $Z$.
\end{assumption}

By stating Assumption \ref{assumption:diffeomorphism} we do not only aim at providing a definition of the mapping $Z$, but we also characterize its regularity. To this end, we stress that requiring $(\cdot,\xiv) \mapsto Z(\cdot, \xiv)$ to be at least homeomorphic, for any $\xiv \in \mathcal{G}$, ensures that the deformation from the original configuration $\Omega(\xiv)$ to the reference domain $\Omegaref$ preserves its topological invariants such as connectedness and compactness. In this way, it is straightforward to conclude that all the parametric domains share the same topological properties. However, it is not enough to require that $(\cdot,\xiv) \mapsto Z(\cdot, \xiv)$ is homeomorphic. Indeed, we must require $Z(\cdot,\xiv)$ and its inverse to be sufficiently smooth in order to cast \eqref{eq:general_formulation} in the reference configuration by performing the change of variable $\bx = Z^{-1}(\tilde{\bx};\xiv)$, namely, 
\begin{equation}
\label{eq:general_formulation_reference}
    \left\{
    \begin{array}{rll}
        \partial_t \tilde{u} + \mathcal{\tilde{N}}[\muv,\xiv](\tilde{u}) &= 0, \qquad &\mbox{in} \  \Omegaref \times (0,T] \\
        \mathcal{\tilde{B}}[\muv,\xiv](\tilde{u}) &= 0,  \qquad & \mbox{on} \ \partial{\Omegaref} \times (0,T] \\
        \tilde{u}(0, \muv, \xiv) - \tilde{u}_0(\muv, \xiv) &= 0, \qquad& \mbox{in} \  \Omegaref,
    \end{array}
    \right. 
\end{equation}
where $\tilde{u}(\tilde{\bx}; t, \muv, \xiv) = u(Z^{-1}(\tilde{\bx}; \xiv); t, \muv, \xiv)$ for any $\bx = Z^{-1}(\tilde{\bx}; \xiv) \in \Omega(\xiv)$ and $\mathcal{\tilde{N}}$ and $\mathcal{\tilde{B}}$ are the counterparts of the nonlinear operator $\mathcal{N}$ and the linear operator $\mathcal{B}$ on the reference domain, namely $\tilde{\mathcal{N}}[\tilde{\bx};\muv, \xiv] = \mathcal{N}[Z^{-1}(\tilde{\bx};\xiv);\muv, \xiv]$ and $\tilde{\mathcal{B}}[\tilde{\bx};\muv, \xiv] = \mathcal{B}[Z^{-1}(\tilde{\bx};\xiv);\muv, \xiv]$. We stress that the smoothness coefficient $r\ge1$ is related to the highest derivative degree entailed by $\mathcal{N}$ and $\mathcal{B}$. It is now evident that problem \eqref{eq:general_formulation_reference} is no longer geometrically parametrized, since now $\xiv \in \mathcal{G}$ plays the role of a mere physical parameter. For this reason, it is possible to proceed through classical arguments to establish the well-posedness of problem \eqref{eq:general_formulation_reference}: we refer the interested reader to \cite{quarteroni2016redb} and to Appendix \ref{sec:appendix-well-posedness} for further insights on the formal proof. As a final remark, even though within the present section we only considered the strong formulation of a PDE problem, a similar analysis can be carried out by employing the variational formulation as well, by performing a suitable change of variables in the integral form of the problem. Theoretical tools introduced so far allow us to adequately characterize geometrical variability in differential problems: not only, they will be essential in the following sections to describe the synthetic data generation phase, and to provide further insights on how geometrical variability might impact on the dimensionality reduction task.

\subsection{Synthetic data generation}
Within the present section, we aim at characterizing the training/testing dataset generation in the context of problems featuring geometrical variability. Since the solution of problem \eqref{eq:general_formulation} is not available in exact form, we harness a high-fidelity solver to obtain a suitably accurate numerical approximation. Specifically, in the following we distinguish two different strategies to synthetically generate data samples.

We emphasize that if the pair $(Z,\Omegaref)$ is known and fixed a priori, it is possible to systematically generate the realizations of the map $\xiv \mapsto \Omega(\xiv)$. In the literature, several approaches were proposed to suitably define $Z$; among them, we mention Free-Form Deformations (FFD) \cite{manzoni2012reduced,manzoni2012shape,lassila2010parametric}, Radial Basis Function (RBF) interpolation \cite{manzoni2012reduced,siena2023data,casenave2023mmgp} or mesh motion techniques \cite{manzoni2017}. When dealing with these strategies, it is in general possible to generate high-fidelity solutions by deforming (through $Z$) an adequately rich set of high-fidelity basis functions defined on the reference domain. Formally, being $W$ a suitable Hilbert space, we characterize the \textit{fixed resolution} approach through the following Assumption.

\begin{assumption}
\label{assumption:deformable-density}
     {\em (Fixed resolution approach)}. We suppose that, for any $\varepsilon > 0$, there exist $N_h$ and a finite dimensional subspace $W_h\subset W$ of dimension $N_h$, $W = \spn\{\tilde{\phi}_i\}_{i=1}^{N_h}$, with $\phi^*_i \in L^\infty(\Omegaref)$ for $i=1,\ldots,N_h$, such that
    \begin{equation*}
        \sup_{(t, \muv, \xiv) \in \mathcal{T} \times \mathcal{P} \times \mathcal{G}} \|u(t, \muv, \xiv) - u_h(t, \muv, \xiv)\|_* < \varepsilon,
    \end{equation*}
    where $\|\cdot\|_*$ is a suitable norm (depending upon the problem) and 
    \[
    u_h(\bx;t, \muv, \xiv) = \sum_{i=1}^{N_h} u_{h,i}(t, \muv, \xiv)\tilde{\phi}^*_i(Z(\bx,\xiv)) \qquad \mbox{for any } t \in \mathcal{T}, \muv \in \mathcal{P}, \xiv \in \mathcal{G} \ \ \mbox{and} \ \bx \in \Omega(\xiv).
    \]
\end{assumption}

\begin{figure}[b!]
    \centering
\includegraphics[width=0.85\textwidth]{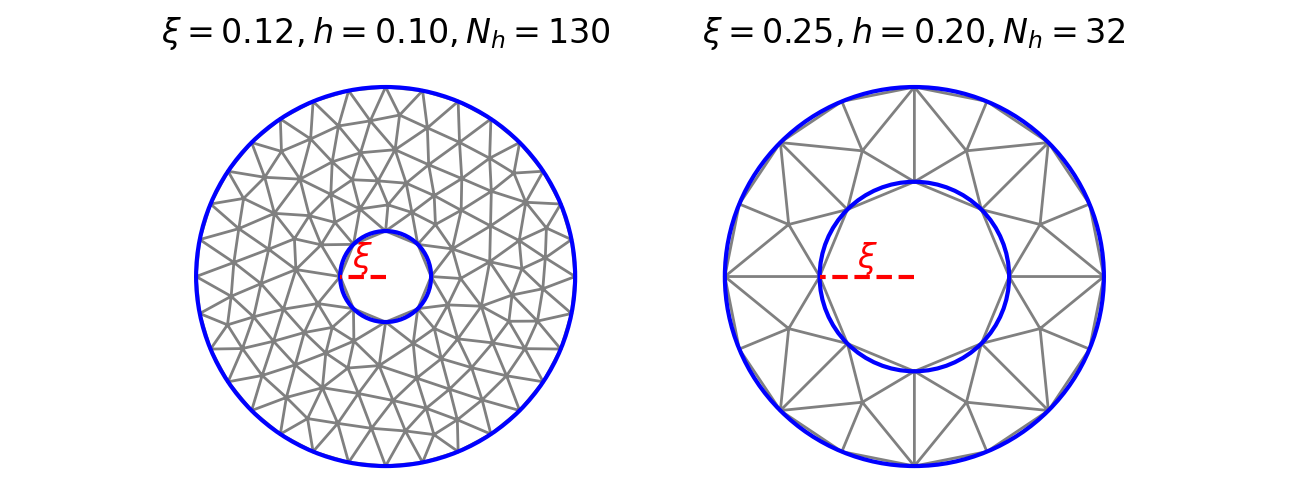}
    \caption{Example of parametric domains with different resolutions (the geometrical parameter $\xi$ is the radius of the hole).}
    \label{fig:example-different-dofs}
\end{figure}

We emphasize that Assumption \ref{assumption:deformable-density} relies on the assumption that the pair $(Z,\Omegaref)$ is known explicitly, which is not always the case in real applications.
Moreover, it is worth remarking that, within this approach, the number of dofs $N_h$ is the same for all data samples: for this reason, it is quite common in the ROM literature, since the majority of ROM techniques rely on a space-discrete formulation. However, while certain domain configurations may require fine discretizations in order to capture possible low-scale effects, other domain instances might only require a coarser discretization to properly grasp the underlying physics. Thus, in order to limit the dataset size by avoiding the storage of redundant information, we may choose to focus on \textit{multi-resolution} datasets, namely,
\begin{assumption}
\label{assumption:adhoc-discretizations-density}
      {\em (Multi-resolution approach)}. We suppose that, for any $\varepsilon > 0$ and any $\xiv \in \mathcal{G}$, there exist $N_h = N_h(\xiv)$ and a finite dimensional subspace $W_h \subset W$ of dimension $N_h$, $W_h = \spn\{\phi_i(\xiv)\}_{i=1}^{N_h}$, with $\phi_i(\xiv) \in L^\infty(\Omega(\xiv))$ for $i=1,\ldots,N_h$, such that
    \begin{equation*}
        \sup_{(\muv,t) \in \mathcal{T} \times \mathcal{P}} \|u(t, \muv, \xiv) - u_h(t, \muv, \xiv)\|_* < \varepsilon,
    \end{equation*}
    where $\|\cdot\|_*$ is a suitable norm (depending upon the problem) and 
    \[u_h(\bx;t, \muv, \xiv) = \sum_{i=1}^{N_h} u_{h,i}(t, \muv, \xiv)\phi_i(\bx,\xiv) \qquad \mbox{for any } t \in \mathcal{T}, \muv \in \mathcal{P}, \xiv \in \mathcal{G} \ \ \mbox{and} \ \bx \in \Omega(\xiv).
    \]
\end{assumption}
Hence, differently from the \textit{fixed resolution} approach, the \textit{multi-resolution} synthetic data generation does not require the availability of $(Z,\Omegaref)$ in a closed form. Anyway, with either a \textit{fixed resolution} or a \textit{multi-resolution} approach, we obtain an high-fidelity formulation of the semidiscretized problem that reads as follows:
\begin{equation}
    \label{eq:general_formulation_fom}
    \left\{
    \begin{array}{rll}
        \partial_t \mathbf{u}_h + \mathbf{N}[\muv,\xiv](\mathbf{u}_h) &= 0, \qquad & t \in (0,T] \\
        \mathbf{u}(0, \muv, \xiv) - \mathbf{u}_0(\muv, \xiv) &= 0 ,
    \end{array}
    \right. 
\end{equation}
where $\mathbf{u}_h := \mathbf{u}_h(t, \muv, \xiv) \in \mathbb{R}^{N_h}$ denotes the solution vector, $\mathbf{N}: \mathbb{R}^{N_h} \rightarrow \mathbb{R}^{N_h}$ is a generic nonlinear mapping that encodes the contribution of both $\mathcal{N}$ and $\mathcal{B}$ at a discrete level, whereas $\mathbf{u}_0(\muv, \xiv)$ is the parameter-dependent initial condition. We emphasize that, subject to the choice of the discretization strategy, $N_h$ might possibly depend on $\xiv$. Since the discretization is in principle parameter dependent (see Fig. \ref{fig:example-different-dofs}), in the following we cannot harness a finite-dimensional formulation. Therefore, the entire content of this paper is based upon an infinite-dimensional setting that we need to establish, allowing us to handle generic datasets obtained through \textit{multi-resolution}-based synthetic data generation. 

\subsection{An infinite-dimensional setting for dimensionality reduction}
\label{subsec:dim-red-setting}
In this section, we first define a suitable infinite-dimensional functional setting and then characterize the solution manifold of the parametric problem defined in Eq. \eqref{eq:general_formulation}. We stress that the following results are relative to the $L^2$ setting, however they can be extended to other separable Hilbert spaces in a straightforward way. We emphasize that characterizing the functional setting of geometrically parametrized problems demands a special attention because the solutions are defined on different, parametrized domains $\Omega = \Omega(\xiv)$. In the following, we develop a strategy to address this problem by constructing a suitably weighted Hilbert space. We proceed as follows: we first describe the chosen abstract setting, and then we justify our choices in light of the theory.
In this respect, for any $\xiv \in \mathcal{G}$, we define
\begin{equation*}
    L_{\zeta}^2(\Omega(\xiv)) = \bigg\{ f : \Omega(\xiv) \rightarrow \mathbb{R} \ \ : \ \  \int_{\Omega(\xiv)} f(\bx)^2 \zeta(\bx,\xiv) d\bx < +\infty \bigg\},
\end{equation*}
identifying the scalar product as $(\cdot,\cdot)_{L_\zeta^2(\Omega(\xiv))}$ and its induced norm as $\|\cdot\|_{L_\zeta^2(\Omega(\xiv))}$. 
Thus, we aim at delineating a proper solution manifold as a bounded subset of a single separable Hilbert space. However, it is evident that the collection
\begin{equation*}
    \mathcal{S} = \{u(t, \muv, \xiv) \in L^2_\zeta(\Omega(\xiv)) \ : \  \muv \in \mathcal{P}, t \in \mathcal{T}, \xiv \in \mathcal{G}\}
\end{equation*}
is an ill-defined solution manifold in a conventional sense, since each solution $u(t, \muv, \xiv)$ is characterized on a different, parameter-dependent, Hilbert space, namely $L_\zeta^2(\Omega(\xiv))$.
However, motivated by the fact that $L_\zeta^2(\Omega(\xiv))$ is weighted by $\zeta$, which is the Jacobian of the diffeomorphism $Z$, it is possible to show that a function of $L_\zeta^2(\Omega(\xiv))$ has a representative in $L^2(\Omegaref)$.
Indeed, we observe that, since $Z(\cdot,\xiv)$ is a $C^r$-diffeomorphism ($r \ge 1$), the following change of variable formula holds for any $f \in L_\zeta^2(\Omega(\xiv))$:
\begin{equation}
\label{eq:change-of-variable}
    \int_{\Omega(\xiv)} f(\bx)^2 \zeta(\bx,\xiv) d\bx = \int_{\Omegaref} f(Z^{-1}(\tilde{\bx},\xiv))^2  d\tilde{\bx}.
\end{equation}
Thus, it is possible to prove that, for any $f,g \in L_\zeta^2(\Omega(\xiv))$, the following relationship between scalar products holds:
\begin{equation*}
\begin{aligned}
(f,g)_{L^2_\zeta(\Omega(\xiv))} &= \int_{\Omega(\xiv)} f(\bx)g(\bx) \zeta(\bx,\xiv) d\bx \\
&= \int_{\Omegaref} f(Z^{-1}(\tilde{\bx},\xiv))g(Z^{-1}(\tilde{\bx},\xiv)) d\tilde{\bx} = (f \circ Z^{-1}(\xiv),g \circ Z^{-1}(\xiv))_{L^2(\Omegaref)},
\end{aligned}
\end{equation*}
where $\tilde{f}(\tilde{\bx}, \xiv) = f(Z^{-1}(\tilde{\bx},\xiv))$ and $\tilde{g}(\tilde{\bx}, \xiv) = g(Z^{-1}(\tilde{\bx},\xiv))$, for any $\xiv \in \mathcal{G}$ and $\tilde{\bx} \in \Omegaref$. 
Thus, upon defining the (linear) morphing operators
\begin{equation*}
\begin{array}{rlccccl}
    &\mathcal{Z}_{\xiv} &: &L^2(\Omegaref) \hspace{0.3cm} &\rightarrow &L^2_\zeta(\Omega(\xiv)) &\mbox{ as } \tilde{f} \mapsto \tilde{f} \circ Z(\xiv) \\
    &\mathcal{Z}_{\xiv}^{-1} &: &L^2_\zeta(\Omega(\xiv)) &\rightarrow &L^2(\Omegaref) &\mbox{ as } f \mapsto f \circ Z^{-1}(\xiv),
\end{array}
\end{equation*} 
the latter equality allows us to properly delineate the solution manifold of the parametric problem defined in Eq. \eqref{eq:general_formulation} in a conventional manner, namely, to define
\begin{equation*}
\begin{aligned}
    \tilde{\mathcal{S}} := \{\mathcal{Z}_{\xiv}^{-1}(u(t, \muv, \xiv)) \in L^2(\Omegaref) \ : \  \muv \in \mathcal{P}, t \in \mathcal{T}, \xiv \in \mathcal{G}\} \subset L^2(\Omegaref).
\end{aligned}
\end{equation*}

\begin{figure}
     \centering
     \begin{subfigure}[b]{0.99\textwidth}
         \centering
         \includegraphics[width=0.99\textwidth]{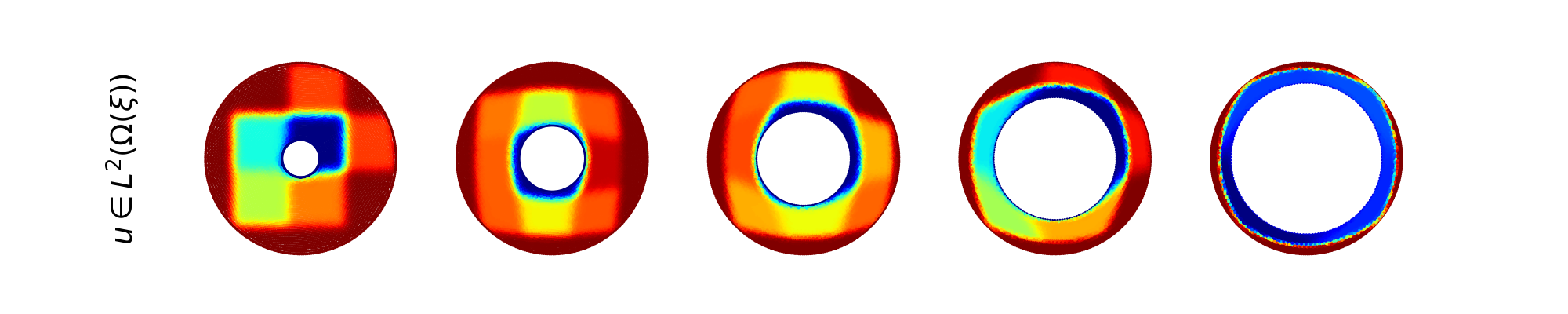}
     \end{subfigure} \\
     \begin{subfigure}[b]{0.99\textwidth}
         \centering
         \includegraphics[width=0.99\textwidth]{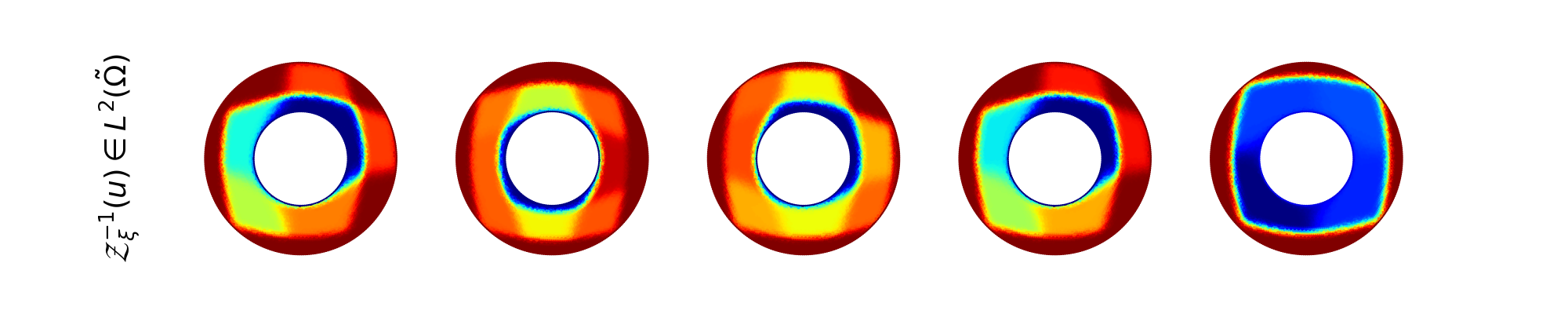}
     \end{subfigure} 
    \caption{Solution of a physically and geometrically parametrized advection diffusion reaction equation on parametric domains { \em(first row)} and action of the morphing operator $\mathcal{Z}_{\xiv}^{-1}$ {\em(second row)}.}
        \label{fig:morphing-operator}
\end{figure}

We refer the reader to Fig. \ref{fig:morphing-operator} for a visualization of the action of the operator $\mathcal{Z}_{\xiv}^{-1}$ and to Appendix \ref{sec:appendix-morphing} for a more technical analysis of $\mathcal{Z}_{\xiv}$ and $\mathcal{Z}^{-1}_{\xiv}$, which are proven to be linear bounded operators. We also emphasize that $\mathcal{Z}_{\xiv}$ and its inverse enable us to morph back and forth from the original configuration $\Omega(\xiv)$ onto the reference setting $\tilde{\Omega}$, namely,
\begin{equation*}
    \mathcal{S} = \{\mathcal{Z}_{\xiv} (\tilde{u}) \ : \  \tilde{u} \in \tilde{\mathcal{S}}\}, \qquad \tilde{\mathcal{S}} = \{\mathcal{Z}^{-1}_{\xiv} (u) \ : \ u \in \mathcal{S}\}.
\end{equation*}
Hence, thanks to this latter observation, we can set a dimensionality reduction framework in the reference configuration, thus enabling us to address the problem from a classical point of view since $\tilde{\mathcal{S}}$ is a proper subset of $L^2(\Omegaref)$ (see \S \ref{sec:infinite-dimensional-setting} and \S \ref{sec:modeling-basis-functions} for more details on the matter). 

Owing to the present functional setting, now we aim at providing a general framework to construct DL-ROMs in infinite dimensions, specifically suited for problems featuring geometrical variability.

\section{CGA-DL-ROMs in an infinite-dimensional setting}
\label{sec:infinite-dimensional-setting}
Within this section, we provide the principles behind the design of the CGA-DL-ROM architecture. Specifically, we characterize its distinguishing traits, namely { \em (i) } the geometry-aware dimensionality reduction in an infinite-dimensional setting, { \em (ii) } the nonlinear autoencoder, and { \em (iii) } the reduced network. We refer to Fig. \ref{fig:CGA-DL-ROM} for a schematic representation of the full architecture.

\vspace{2mm}
\noindent \textbf{CGA projection and lifting.} 
The purpose of the outermost block of the architecture is to provide a finite-dimensional representation of the infinite-dimensional solution field through a linear projection. To do that, we employ the space-continuous projection operator $\mathcal{V}^\dagger_{\xiv}:   L_\zeta^2(\Omega(\xiv)) \rightarrow \mathbb{R}^N$ and the space-continuous lifting operator $\mathcal{V}_{\xiv}: \mathbb{R}^N \rightarrow  L_\zeta^2(\Omega(\xiv))$.

Owing to \S\ref{subsec:dim-red-setting}, it is possible to decompose the action of each of the operators $\mathcal{V}^\dagger_{\xiv}$ and $\mathcal{V}_{\xiv}$ into two separate steps, namely,
\begin{equation*}
\setstretch{2.5}
\begin{array}{rrccccc}
    &\mathcal{V}^\dagger_{\xiv} : &L_\zeta^2(\Omega(\xiv)) &\xrightarrow[morphing]{\mathcal{Z}^{-1}_{\xiv}} &L^2(\Omegaref) &\xrightarrow[projection]{} &\mathbb{R}^N, \\ 
    &\mathcal{V}_{\xiv}  : &\mathbb{R}^N &\xrightarrow[lifting]{}  &L^2(\Omegaref)  &\xrightarrow[morphing]{\mathcal{Z}_{\xiv}} &L_\zeta^2(\Omega(\xiv)).
\end{array}
\end{equation*}
Thus, it is evident that, in practice, the effective projection and lifting operations only concern the reference configuration $\Omegaref$. Thanks to this observation, we introduce the continuous geometry-aware (CGA) basis functions $\{v^{CGA}_n(\xiv)\}_{n=1}^N \subset L^2(\tilde{\Omega})$, a set of global basis functions that are defined on the reference configuration and that depend on $\xiv$, and provide a formal definition of
\begin{equation*}
\setstretch{1.75}
\begin{array}{rll}
    &\mathcal{V}^\dagger_{\xiv}(f) :=  \{(\mathcal{Z}^{-1}_{\xiv}(f), v^{CGA}_n(\xiv))_{L^2(\Omegaref)}\}_{n=1}^N  \qquad &\forall f \in  L_\zeta^2(\Omega(\xiv)), \\ 
    &\mathcal{V}_{\xiv} (\bm{a}) := \mathcal{Z}_{\xiv}\biggl(\sum_{n=1}^N a_n v^{CGA}_n(\xiv)\biggr) \qquad &\forall \bm{a} = \{a_n\}_{n=1}^N \in \mathbb{R}^N.
\end{array}
\end{equation*}
Then, owing to the change of variable formula of \S \ref{subsec:dim-red-setting} and to the linearity of the morphing operator $\mathcal{Z}_{\xiv}$, we can derive a more compact, alternative definition of space-continuous projection and lifting operators,
\begin{equation*}
\setstretch{1.75}
\begin{array}{rll}
    &\mathcal{V}^\dagger_{\xiv}(f) :=  \{(f, \mathcal{Z}_{\xiv}(v^{CGA}_n(\xiv)))_{L_\zeta^2(\Omega(\xiv))}\}_{n=1}^N \qquad &\forall f \in  L_\zeta^2(\Omega(\xiv)), \\ 
    &\mathcal{V}_{\xiv} (\bm{a}) := \sum_{n=1}^N a_n \mathcal{Z}_{\xiv}(v^{CGA}_n(\xiv)) \qquad &\forall \bm{a} = \{a_n\}_{n=1}^N \in \mathbb{R}^N.
\end{array}
\end{equation*}

We emphasize that the compressive capabilities of the couple $(\mathcal{V}_{\xiv}, \mathcal{V}^{\dagger}_{\xiv})$ strongly depends on how we characterize the dependence on the geometrical parameters of the global basis functions $\{v^{CGA}_n(\xiv)\}_{n=1}^N$: we further discuss about this latter aspect in \S \ref{subsec:compressive-capabilities}. Nonetheless, since the collection $\{\mathcal{Z}_{\xiv}(v^{CGA}_n(\xiv))\}_{n=1}^N$ is in general unavailable in a closed form, we remark that within this work we model it through neural networks of weights and biases $\theta^v$, namely,
\begin{equation*}
    (\Omega(\xiv), \xiv) \ni (\bx, \xiv) \mapsto \{\hat{v}^{CGA}_n(\bx, \xiv)\}_{n=1}^N \approx \{\mathcal{Z}_{\xiv}(v^{CGA}_n(\xiv))(\bx)\}_{n=1}^N,
\end{equation*}
and we refer the interested reader to \S \ref{subsec:modeling-nn} for a more comprehensive outlook on the matter.

\begin{figure}[b!]
    \centering
    \vspace{-0.25cm}
    \includegraphics[width=0.925\textwidth]{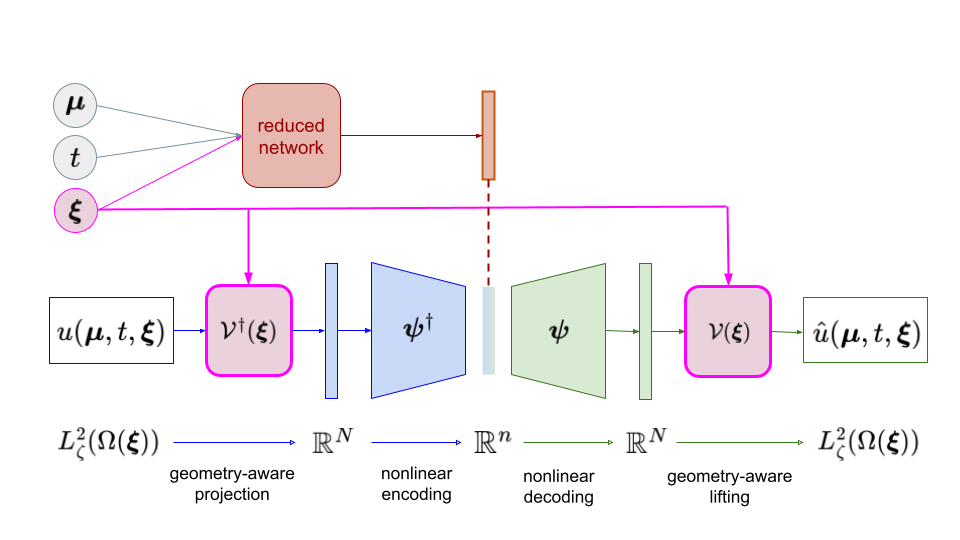}
    \caption{Schematic representation of the CGA-DL-ROM architecture.}
    \label{fig:CGA-DL-ROM}
\end{figure}

\vspace{2mm}
\noindent \textbf{Nonlinear autoencoder.} The purpose of the autoencoder architecture is to further compress the representation entailed by $(\mathcal{V}^\dagger_{\xiv}, \mathcal{V}_{\xiv})$ into a set of $l > 0$ latent coordinates. Thus, we configure the autoencoder architecture as the couple $(\psi^\dagger, \psi)$, where $\psi^\dagger: \mathbb{R}^N \rightarrow \mathbb{R}^l$ and $\psi: \mathbb{R}^l \rightarrow \mathbb{R}^N$ are such that $\psi^\dagger \circ \psi \approx Id$.  Normally, we parametrize the autoencoder $(\psi^\dagger, \psi)$ with (deep) neural networks and we denote with $\theta^{AE}$ the collection of weights and biases they depend on. We mention that, in the literature, both dense and convolutional architectures have proven to be effective for this purpose; see, e.g., \cite{fresca2021comprehensive,fresca2022poddlrom} for further details.

\vspace{2mm}
\noindent \textbf{Reduced network.} 
We design the reduced network as a feed-forward neural network architecture $\phi[\theta^\phi]: \mathbb{R}^{p+g+1} \rightarrow \mathbb{R}^{l}$, which is parametrized by its weights and biases $\theta^\phi$, and strives to approximate the reduced parametric map $(t, \muv, \xiv) \mapsto  \psi^\dagger \circ \mathcal{V}^\dagger (u(t, \muv, \xiv))$.

\vspace{2mm}
\noindent Thus, the resulting CGA-DL-ROM architecture is parametrized by $(\theta^{v},\theta^{AE},\theta^{\phi})$, which are the weights and biases of the geometry-aware basis functions, the autoencoder, and the reduced network, respectively. We emphasize that the optimization phase of CGA-DL-ROMs consists in seeking the optimal set of neural network weights (and biases) with respect to the following \textit{per-example} loss functional (for any $\muv \in \mathcal{P}, \xiv \in \mathcal{G}, t \in \mathcal{T}$), 
\begin{equation}
\label{eq:loss_continuous}
%\begin{aligned}
    %&
    \mathcal{L}_{CGA}(t, \muv, \xiv) = \|\mathcal{V}_{\xiv} \circ \psi \circ \phi(t, \muv, \xiv) -u(t, \muv, \xiv)\|^2_{L_\zeta^2(\Omega(\xiv))}  %\\
    %& \hspace{6cm} 
    + \|\phi(t, \muv, \xiv) - \psi^\dagger \circ \mathcal{V}_{\xiv}^\dagger(u(t, \muv, \xiv))\|_2^2,
%\end{aligned}
\end{equation}
which is composed of two different additive terms. The purpose of the former term is to control the squared reconstruction error; on the other hand, the latter term strives to constrain the architecture to find a suitable latent representation.  It is worth emphasizing that the CGA basis function are learnt in an unsupervised manner and that the encoder network is required only during the training phase. Indeed, similarly to other DL-ROM architectures, at inference stage we discard the encoder and use $\mathcal{V} \circ \psi \circ \phi$ for online computations. As a final remark, we also mention that in practice we are not able to compute exactly the integrals over $\Omega(\xiv)$, so that we replace \eqref{eq:loss_continuous} with its empirical counterpart. 

We finally stress that, while the nonlinear autoencoder and the reduced network have been analyzed in previous works \cite{brivio2024error, franco2023deep}, the impact of the novel proposed CGA projection and lifting is still to be analyzed.

\section{Characterization of CGA basis functions}
\label{sec:modeling-basis-functions}

We now propose an extensive analysis of CGA projection and lifting operators, providing also a characterization of their compressive capabilities. Specifically, our purpose is to show that only a small number $N$ of CGA basis functions is sufficient to capture the variability of the solution manifold (see \S \ref{subsec:compressive-capabilities}). This ensures a light neural network architecture and aims at enhancing the overall approximation capabilities. Moreover, since such basis functions are not analytically available, we provide  in \S \ref{subsec:modeling-nn} a suitable approximation through neural networks.

\subsection{The compressive capabilities of CGA basis functions}
\label{subsec:compressive-capabilities}
Before rigorously presenting the CGA approach, in order to fully appreciate the compressive capabilities of CGA basis functions, we first introduce the classical POD framework, which is the baseline we want to compare our approach with. First, we stress that, thanks to the results of the previous sections, we are allowed to cast the dimensionality reduction problem on the reference configuration $\tilde{\Omega}$. Thus, we characterize the POD approach in a rather general way, through a result formulated in a Hilbert context, relying on the theoretical setting reported in \cite{locke2020proper,singler2014pod} (we refer the interested reader to these latter works and the references therein for the proof).

\begin{lemma}
\label{lemma:pod-energy-continuous}
    Letting $u \in L^2(\mathcal{T} \times \mathcal{P} \times \mathcal{G}; L^2(\Omega(\xiv)))$, we define the POD cost functional as
    \begin{equation}
    \label{eq:POD-optimization-functional}
        \mathcal{J}_{POD}(\{w_n\}_{n=1}^N) = \bigg\|\mathcal{Z}_{\xiv}^{-1}(u) - \sum_{k=1}^{N} (\mathcal{Z}_{\xiv}^{-1}(u), w_n)_{L^2(\Omegaref)} w_n \bigg\|^2_{L^2(\mathcal{T} \times \mathcal{P} \times \mathcal{G};L^2(\Omegaref))},
    \end{equation}
    where $W_N = \{\{w_n\}_{n=1}^N \in L^2(\Omegaref) : (w_n,w_m)_{L^2(\tilde{\Omega})}=1, \quad \forall n,m \in \{1,\ldots,N\}\}$. Then, 
    \begin{equation}
    \label{eq:POD-optimization-problem}
    \begin{aligned}
        \mathcal{E}_{POD}(N) = \min_{\{w_n\}_{n=1}^N \in W_N} \mathcal{J}_{POD}(\{w_n\}_{n=1}^N)  &= \mathcal{J}_{POD}(\{v^{POD}_n\}_{n=1}^N)  = \sum_{n>N} l_n  < +\infty,
    \end{aligned}
    \end{equation}
    that is, over all possible linear finite-dimensional subspaces $W_N$, the projection error attains its minimum for the POD eigenpairs $(v^{POD}_n,l_n)_n \subset L^2(\Omegaref) \times \mathbb{R}$ collecting the eigenfunctions and the eigenvalues of the compact linear operator $K : L^2(\mathcal{T} \times \mathcal{P} \times \mathcal{G}) \rightarrow L^2(\Omegaref)$ defined as 
    \[
    Kg = (u(\bx), g)_{L^2(\mathcal{T} \times \mathcal{P} \times \mathcal{G})}, \qquad \mbox{for any } g \in L^2(\mathcal{T} \times \mathcal{P} \times \mathcal{G}).
    \]
\end{lemma}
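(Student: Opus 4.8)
The plan is to reduce the whole minimization to the reference configuration and then to recognize it as a classical eigenvalue-optimization problem for a suitable correlation operator. As a first step, I set $\tilde{u} := \mathcal{Z}_{\xiv}^{-1}(u) \in L^2(\mathcal{T}\times\mathcal{P}\times\mathcal{G}; L^2(\Omegaref))$, which is legitimate thanks to the morphing operators of \S\ref{subsec:dim-red-setting} (note that the snapshot operator $K$ of the statement is understood to act on these morphed snapshots, since its target is $L^2(\Omegaref)$). For any orthonormal family $\{w_n\}_{n=1}^N \in W_N$, the map $P = \sum_n (\cdot, w_n)_{L^2(\Omegaref)} w_n$ is the orthogonal projection onto $\spn\{w_n\}_{n=1}^N$, so the Pythagorean identity gives, pointwise in $(t,\muv,\xiv)$, the relation $\|\tilde u - P\tilde u\|^2_{L^2(\Omegaref)} = \|\tilde u\|^2_{L^2(\Omegaref)} - \sum_{n=1}^N (\tilde u, w_n)^2_{L^2(\Omegaref)}$. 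Integrating over the parameter space yields
\begin{equation*}
\mathcal{J}_{POD}(\{w_n\}_{n=1}^N) = \|\tilde u\|^2_{L^2(\mathcal{T}\times\mathcal{P}\times\mathcal{G}; L^2(\Omegaref))} - \sum_{n=1}^N \int_{\mathcal{T}\times\mathcal{P}\times\mathcal{G}} (\tilde u, w_n)^2_{L^2(\Omegaref)} \, d(t,\muv,\xiv),
\end{equation*}
so that minimizing $\mathcal{J}_{POD}$ is equivalent to maximizing the captured energy $\sum_{n=1}^N \int (\tilde u, w_n)^2_{L^2(\Omegaref)}\, d(t,\muv,\xiv)$ over all orthonormal systems of size $N$.

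The second step is to encode this captured energy through the correlation operator $R := K K^* : L^2(\Omegaref)\to L^2(\Omegaref)$, where $K^*$ is the adjoint of the snapshot operator, $(K^* w)(t,\muv,\xiv) = (\tilde u(t,\muv,\xiv), w)_{L^2(\Omegaref)}$. A direct computation gives $\int (\tilde u, w_n)^2_{L^2(\Omegaref)}\, d(t,\muv,\xiv) = (R w_n, w_n)_{L^2(\Omegaref)}$, so the captured energy is exactly $\sum_{n=1}^N (R w_n, w_n)_{L^2(\Omegaref)}$. The operator $R$ is self-adjoint and positive semidefinite by construction; the crucial point is that the hypothesis $u \in L^2(\mathcal{T}\times\mathcal{P}\times\mathcal{G}; L^2(\Omegaref))$ makes $K$, and hence $R$, Hilbert--Schmidt and therefore compact. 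Consequently $R$ admits a spectral decomposition with a nonincreasing sequence of nonnegative eigenvalues $l_1 \ge l_2 \ge \cdots \to 0$ and an orthonormal eigenbasis $\{v^{POD}_n\}_n$, which are precisely the POD eigenpairs appearing in the statement (equivalently, the right singular system of $K$).

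The final step is to invoke the Ky Fan / Courant--Fischer maximum principle for compact self-adjoint operators: over all orthonormal systems $\{w_n\}_{n=1}^N$, the trace-type quantity $\sum_{n=1}^N (R w_n, w_n)_{L^2(\Omegaref)}$ is maximized by the first $N$ eigenfunctions and equals $\sum_{n=1}^N l_n$. Substituting back into the reduction of the first step gives $\mathcal{E}_{POD}(N) = \|\tilde u\|^2 - \sum_{n=1}^N l_n$; since $R$ is trace-class with $\operatorname{tr}(R) = \|\tilde u\|^2_{L^2(\mathcal{T}\times\mathcal{P}\times\mathcal{G}; L^2(\Omegaref))} = \sum_{n\ge1} l_n$, I conclude $\mathcal{E}_{POD}(N) = \sum_{n>N} l_n$, and finiteness follows at once from $u \in L^2$.

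I expect the main obstacle to be the rigorous verification that $R$ is compact — indeed Hilbert--Schmidt — under only the $L^2$ regularity hypothesis, together with the careful bookkeeping between the operator $K$, which acts across the parameter and spatial spaces, and the self-adjoint $R = K K^*$ whose eigenpairs the statement actually refers to. Everything else, namely the Pythagorean reduction and the min-max argument, is standard once these functional-analytic prerequisites are in place; for the detailed proof of compactness and of the Ky Fan principle in this infinite-dimensional Hilbert setting I would defer to \cite{locke2020proper,singler2014pod}.
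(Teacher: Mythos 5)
The paper offers no proof of this lemma, deferring instead to \cite{locke2020proper,singler2014pod}; your argument --- the Pythagorean reduction of the projection error to a captured-energy maximization, the identification of that energy with $\sum_{n=1}^N (KK^*w_n,w_n)_{L^2(\Omegaref)}$, the Hilbert--Schmidt compactness of $K$ from the $L^2$ kernel hypothesis, and the Ky Fan maximum principle together with the trace identity $\operatorname{tr}(KK^*)=\sum_{n\ge1}l_n$ --- is exactly the standard route taken in those references, and it is correct. Your two side remarks also correctly repair small imprecisions in the statement itself: the eigenpairs properly belong to the self-adjoint correlation operator $KK^*$ rather than to $K$ (which maps between different spaces and so has no eigenfunctions), and the paper's condition $(w_n,w_m)_{L^2(\tilde{\Omega})}=1$ for all $n,m$ must indeed be read as orthonormality $(w_n,w_m)_{L^2(\tilde{\Omega})}=\delta_{nm}$, since taken literally it would force all the $w_n$ to coincide.
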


In other words, the previous lemma states in a more general setting the well-known result for which POD provides the best set of basis functions \textit{on average} (in the $L^2$ sense) with respect to the physical and geometrical parameters, according to the projection error metric. In this respect, we stress that physical and geometrical parameters actually play the same role in the POD optimization functional \eqref{eq:POD-optimization-functional} but usually induce a different effect on the variability of the solution manifold. Indeed, a slow eigenvalue decay -- that is, when $l_N$ exhibits a polynomial decay as $N \rightarrow \infty$ -- is usually caused by the geometrical parametrization \cite{franco2024deep}, while the physical parameters often have a marginal contribution. The latter scenario might require the user to employ a large number $N$ of reduced basis functions to suitably reconstruct the variability of solution manifold, which turns to be extremely inefficient from a computational viewpoint. In this respect, the CGA approach aims at mitigating the aforementioned inefficiency by ultimately providing, \textit{for any instance of the geometrical parameter} $\xiv \in \mathcal{G}$, a set of modal basis functions that is optimal for the chosen parameter $\xiv$. This latter approach better reflects the different effects that physical and geometrical parameters have on the solution manifold variability. More formally, we characterize the CGA optimization problem through the following result.
\begin{lemma}
\label{lemma:CGA-functional}
     Letting $u \in L^2(\mathcal{T} \times \mathcal{P} \times \mathcal{G}; L^2(\Omega(\xiv)))$, we define the CGA cost functional as
    \begin{equation}
    \label{eq:CGA-functional}
    \mathcal{J}_{CGA}(\xiv, \{w_n\}_{n=1}^{N}) := \bigg\|\mathcal{Z}_{\xiv}^{-1}(u) - \sum_{n=1}^{N} (\mathcal{Z}_{\xiv}^{-1}(u), w_n)_{L^2(\Omegaref)} w_n \bigg\|^2_{L^2(\mathcal{T} \times \mathcal{P};L^2(\tilde{\Omega}))},
    \end{equation}
    and denote as $B = \{v \in L^2(\tilde{\Omega}) : \|v\|_{L^2(\tilde{\Omega})} = 1\}$ the unit ball in $L^2(\tilde{\Omega})$. Then, the CGA optimization problem
    \begin{equation}
    \label{eq:CGA-optimization-problem}
        \min_{\{w_n\}_{n=1}^{N} \in [B]^N}  \mathcal{J}_{CGA}(\xiv, \{w_n\}_{n=1}^{N}),
    \end{equation}
    is well-defined, that is, for any $\xiv \in \mathcal{G}$ there exists $\{v^{CGA}_n(\xiv)\}_{n=1}^N$ for which the CGA cost functional \eqref{eq:CGA-functional} attains the minimum. 
\end{lemma}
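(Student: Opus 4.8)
The plan is to observe that, for each \emph{fixed} $\xiv \in \mathcal{G}$, the CGA optimization problem \eqref{eq:CGA-optimization-problem} is structurally identical to the POD problem of Lemma \ref{lemma:pod-energy-continuous}, the only differences being that the averaging domain shrinks from $\mathcal{T} \times \mathcal{P} \times \mathcal{G}$ to $\mathcal{T} \times \mathcal{P}$ and that the snapshot field is the morphed solution $g_{\xiv} := \mathcal{Z}_{\xiv}^{-1}(u(\cdot,\cdot,\xiv))$. Accordingly, I would establish existence by exhibiting the minimizer \emph{explicitly} through the spectral theorem applied to a correlation operator, rather than attempting a direct compactness argument on the feasible set.

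First I would fix $\xiv \in \mathcal{G}$ and verify that $g_{\xiv} \in L^2(\mathcal{T} \times \mathcal{P}; L^2(\Omegaref))$. By the change-of-variable formula \eqref{eq:change-of-variable}, $\mathcal{Z}_{\xiv}^{-1}$ is an isometry from $L^2_\zeta(\Omega(\xiv))$ onto $L^2(\Omegaref)$; moreover, since $Z(\cdot,\xiv)$ is a $C^r$-diffeomorphism on a compact domain (Assumption \ref{assumption:diffeomorphism}), the Jacobian $\zeta(\cdot,\xiv)$ is continuous and bounded above and below by positive constants, so that $\|\cdot\|_{L^2_\zeta(\Omega(\xiv))}$ and $\|\cdot\|_{L^2(\Omega(\xiv))}$ are equivalent norms. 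Combining these two facts with the hypothesis $u \in L^2(\mathcal{T} \times \mathcal{P} \times \mathcal{G}; L^2(\Omega(\xiv)))$ and Tonelli's theorem yields the required integrability of $g_{\xiv}$ over $\mathcal{T} \times \mathcal{P}$ (for a.e.\ $\xiv$ by Fubini, and for every $\xiv$ under the parametric continuity guaranteed by the well-posedness of \eqref{eq:general_formulation_reference}).

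Next I would introduce the correlation operator $R_{\xiv} : L^2(\Omegaref) \to L^2(\Omegaref)$, defined by $R_{\xiv} w = \int_{\mathcal{T} \times \mathcal{P}} (g_{\xiv}(t,\muv), w)_{L^2(\Omegaref)}\, g_{\xiv}(t,\muv)\, dt\, d\muv$. Thanks to the integrability just established, $R_{\xiv}$ is a well-defined, self-adjoint, positive semidefinite, trace-class (hence compact) operator, so the spectral theorem supplies an orthonormal eigensystem $\{v^{CGA}_n(\xiv)\}_n$ with nonnegative eigenvalues $\lambda_n(\xiv) \downarrow 0$. I would then invoke the optimality content of Lemma \ref{lemma:pod-energy-continuous}: the projection error $\int_{\mathcal{T} \times \mathcal{P}} \|g_{\xiv} - P_V g_{\xiv}\|^2_{L^2(\Omegaref)}$, minimized over all $N$-dimensional subspaces $V \subset L^2(\Omegaref)$, attains its minimum $\sum_{n > N} \lambda_n(\xiv)$ at $V^\ast = \spn\{v^{CGA}_n(\xiv)\}_{n=1}^N$. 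Finally I would transfer this subspace optimality to the frame-constrained functional \eqref{eq:CGA-functional}: for any $\{w_n\}_{n=1}^N \in [B]^N$, writing $V = \spn\{w_n\}$ and using that $P_V g$ is the best approximation of $g$ in $V$ gives, fiberwise in $(t,\muv)$, the inequality $\|g_{\xiv} - \sum_n (g_{\xiv}, w_n)_{L^2(\Omegaref)} w_n\|^2 \ge \|g_{\xiv} - P_V g_{\xiv}\|^2$; integrating over $\mathcal{T} \times \mathcal{P}$ shows $\mathcal{J}_{CGA}(\xiv, \{w_n\}) \ge \sum_{n>N} \lambda_n(\xiv)$, while the orthonormal frame $\{v^{CGA}_n(\xiv)\}_{n=1}^N \in [B]^N$ realizes $\sum_n (g_{\xiv}, v^{CGA}_n) v^{CGA}_n = P_{V^\ast} g_{\xiv}$ and hence attains the bound.

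The main obstacle I anticipate is this last transfer step together with the non-compactness of the feasible set: $[B]^N$ is a product of unit spheres, which in the infinite-dimensional space $L^2(\Omegaref)$ is bounded but neither compact nor weakly closed, so a naive ``minimizing sequence plus weak limit'' argument fails because a weak limit of unit vectors may drop below unit norm and leave the sphere. Routing through the spectral theorem circumvents this by constructing the minimizer directly. A secondary, more routine point is that \eqref{eq:CGA-functional} features the frame operator $\sum_n (\cdot, w_n) w_n$ rather than the orthogonal projector onto $\spn\{w_n\}$, so one must justify that restricting to orthonormal frames does not raise the infimum --- which is precisely the best-approximation inequality invoked above.
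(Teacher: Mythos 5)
Your proof is correct, but it takes a genuinely different route from the paper. The paper proceeds by the direct method of the calculus of variations: it endows $B$ with a metric compatible with the weak topology, proves that $\mathcal{J}_{CGA}$ is weakly lower semi-continuous jointly in $(\xiv,\{w_n\}_{n=1}^N)$ (using the Lipschitz dependence $\xiv\mapsto u(\xiv)$ and boundedness of $\mathcal{Z}_{\xiv}^{-1}$ to pass to the limit in the inner products), and concludes via attainment of the infimum of a weakly l.s.c.\ functional on a weakly compact set. You instead construct the minimizer explicitly as the leading eigenfunctions of the correlation operator $R_{\xiv}$ and transfer subspace optimality to the frame-constrained functional via the best-approximation inequality; both your integrability verification and your handling of the frame-versus-projector issue are sound, and the only routine detail left implicit is completing the eigensystem with arbitrary orthonormal vectors when $R_{\xiv}$ has fewer than $N$ nonzero eigenvalues. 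Your approach buys more than existence: it identifies the minimizer and the minimum value $\sum_{n>N}\lambda_n(\xiv)$, which feeds directly into the later discussion of eigenvalue decay. The paper's approach, by proving \emph{joint} lower semi-continuity on $\mathcal{G}\times[B]^N$, is geared toward the measurability/continuity of $\xiv\mapsto\{v_n^{CGA}(\xiv)\}$ in the spirit of the cited parametric low-rank literature, which your pointwise construction does not immediately provide. Finally, your observation that the unit sphere is bounded but not weakly closed is well taken: the paper defines $B$ by $\|v\|_{L^2(\tilde{\Omega})}=1$ while invoking weak compactness by reflexivity, which strictly speaking holds for the closed unit ball $\{\|v\|\le 1\}$ rather than the sphere; your spectral construction sidesteps this issue entirely, whereas the paper's argument requires reading $B$ as the closed ball (the stated minimum is still attained at unit-norm elements, so the conclusion is unaffected).
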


We refer the interested reader to Appendix \ref{sec:appendix-proofs} for the proof of the result above. Also, we summarize the POD and CGA optimization problems' properties in Table \ref{tab:optimization-problems}.
\bigskip
\begin{center}
    \renewcommand{\arraystretch}{1.6}
    \begin{tabular}{|c|c|c|}
        \hline
            & Optimization criterion & Basis functions\\
        \hline
        POD & average projection error on $L^2(\mathcal{T} \times \mathcal{P} \times \mathcal{G}; L^2(\Omegaref))$  & $\bx \mapsto \{v^{POD}_n(\bx)\}_{n=1}^N$ \\
        CGA & $\forall \xiv \in \mathcal{G}$, average projection error on $L^2(\mathcal{T} \times \mathcal{P}; L^2(\Omegaref))$ &  $(\bx,\xiv) \mapsto \{v^{CGA}_n(\bx,\xiv)\}_{n=1}^N$ \\
        \hline
    \end{tabular}
    \captionof{table}{Summary of the POD and CGA optimization problems.}
    \label{tab:optimization-problems}
\end{center}
\bigskip
Nonetheless, we highlight that, thanks to the functional setting provided in \S \ref{subsec:dim-red-setting}, it is straightforward to prove through the change of variable formula the following equalities, which show that the optimization problems defined in Eqs. \eqref{eq:POD-optimization-problem} and \eqref{eq:CGA-optimization-problem} can be equivalently cast in the original configuration $\Omega(\xiv)$, namely,
\begin{equation*}
\begin{array}{rll}
     &\mathcal{J}_{POD}(\{w_n\}_{n=1}^{N}) &= \bigg\|u -  \sum_{n=1}^{N} (u, \mathcal{Z}_{\xiv}(w_n))_{L^2(\Omegaref)} \mathcal{Z}_{\xiv}(w_n) \bigg\|^2_{L^2(\mathcal{T} \times \mathcal{P} \times \mathcal{G};L^2_{\zeta}(\Omega(\xiv))}\\
     &\mathcal{J}_{CGA}(\xiv, \{w_n\}_{n=1}^{N}) &= \bigg\|u -  \sum_{n=1}^{N} (u, \mathcal{Z}_{\xiv}(w_n))_{L^2(\Omegaref)} \mathcal{Z}_{\xiv}(w_n) \bigg\|^2_{L^2(\mathcal{T} \times \mathcal{P};L^2_{\zeta}(\Omega(\xiv))}.
\end{array}
\end{equation*}
Then, we can prove the main result of the present section, which showcases the fact that, in general, the CGA basis functions outperform POD modes, for the same dimension $N$ of the reduced approximation.
\begin{proposition}
    For any $N < \infty$, we define the POD and the CGA best approximation errors as
    \begin{equation*}
    \def\arraystretch{1.5}
    \begin{array}{rll}
        &\mathsf{BAE}_{CGA}(N) &:=  \int_{\mathcal{G}} \mathcal{J}_{CGA}(\xiv, \{v^{CGA}_n\}_{n=1}^{N}) d\xiv, \\
        &\mathsf{BAE}_{POD}(N) &:= \mathcal{J}_{POD}(\{v^{POD}_n\}_{n=1}^N),
    \end{array}
    \end{equation*}
    respectively.
    Then, we can prove that
    \begin{equation*}
        \mathsf{BAE}_{CGA}(N) \le \mathsf{BAE}_{POD}(N).
    \end{equation*}
\end{proposition}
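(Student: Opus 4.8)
The plan is to exploit the fact that the POD and CGA functionals share the very same integrand and differ only in \emph{where} the geometrical parameter is integrated: inside the norm for POD, and outside through the leading $\int_{\mathcal{G}}$ for CGA. First I would record the Fubini--Tonelli identity. For any \emph{fixed} family $\{w_n\}_{n=1}^N \subset B$ that does not depend on $\xiv$, the squared $L^2(\mathcal{T}\times\mathcal{P}\times\mathcal{G};L^2(\Omegaref))$-norm of a nonnegative, measurable integrand factors as an iterated integral, so that
\[
\mathcal{J}_{POD}(\{w_n\}_{n=1}^N) = \int_{\mathcal{G}} \mathcal{J}_{CGA}(\xiv,\{w_n\}_{n=1}^N)\, d\xiv.
\]
This is the only computational step, and it is routine since Tonelli applies to the nonnegative integrand.

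Next I would apply this identity to the POD-optimal modes $\{v^{POD}_n\}_{n=1}^N$, which are constant in $\xiv$, obtaining
\[
\mathsf{BAE}_{POD}(N) = \mathcal{J}_{POD}(\{v^{POD}_n\}_{n=1}^N) = \int_{\mathcal{G}} \mathcal{J}_{CGA}(\xiv,\{v^{POD}_n\}_{n=1}^N)\, d\xiv.
\]
The decisive observation is that, for \emph{each} $\xiv \in \mathcal{G}$, the family $\{v^{POD}_n\}_{n=1}^N$ is admissible for the CGA problem \eqref{eq:CGA-optimization-problem}, being composed of unit-norm elements of $L^2(\Omegaref)$ and hence lying in $[B]^N$. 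By Lemma \ref{lemma:CGA-functional} the CGA minimum is attained at $\{v^{CGA}_n(\xiv)\}_{n=1}^N$, so pointwise in $\xiv$
\[
\mathcal{J}_{CGA}(\xiv,\{v^{CGA}_n(\xiv)\}_{n=1}^N) = \min_{\{w_n\}\in[B]^N}\mathcal{J}_{CGA}(\xiv,\{w_n\}) \le \mathcal{J}_{CGA}(\xiv,\{v^{POD}_n\}_{n=1}^N).
\]
Integrating this inequality over $\mathcal{G}$ and comparing with the display above yields exactly $\mathsf{BAE}_{CGA}(N) \le \mathsf{BAE}_{POD}(N)$.

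The main obstacle is not the inequality but making the left-hand side well posed: I must ensure that $\xiv \mapsto \mathcal{J}_{CGA}(\xiv,\{v^{CGA}_n(\xiv)\}_{n=1}^N)$ is measurable, so that $\mathsf{BAE}_{CGA}(N)$ is defined and Tonelli can be invoked. This is a measurable-selection issue, delicate because $B$ lies in an infinite-dimensional space, so norm-compactness of the admissible set fails and one must work in the weak topology. I would resolve it by observing that $(\xiv,\{w_n\})\mapsto \mathcal{J}_{CGA}(\xiv,\{w_n\})$ is a Carath\'{e}odory integrand (measurable in $\xiv$, continuous in $\{w_n\}$) and that existence of minimizers is already granted by Lemma \ref{lemma:CGA-functional}; the value function $\xiv\mapsto \min_{\{w_n\}}\mathcal{J}_{CGA}(\xiv,\{w_n\})$ is then measurable by a standard measurable-selection argument, and one may in fact select $\xiv\mapsto\{v^{CGA}_n(\xiv)\}$ measurably. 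Once measurability is secured, the monotonicity argument above closes the proof immediately.
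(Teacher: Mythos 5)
Your proposal is correct and follows essentially the same route as the paper's proof: the identity $\mathcal{J}_{POD}(\{w_n\}) = \int_{\mathcal{G}}\mathcal{J}_{CGA}(\xiv,\{w_n\})\,d\xiv$ for $\xiv$-independent families, combined with the pointwise optimality of $\{v^{CGA}_n(\xiv)\}$ over the admissible set $[B]^N \supset W_N$, then integration over $\mathcal{G}$. Your additional attention to the measurability of $\xiv \mapsto \mathcal{J}_{CGA}(\xiv,\{v^{CGA}_n(\xiv)\})$ is a point the paper leaves implicit, and is a welcome refinement rather than a divergence.
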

\begin{proof}
We recall that $W_N = \{\{w_n\}_{n=1}^N \in L^2(\Omegaref) : (w_n,w_m)_{L^2(\tilde{\Omega})} = 1, \forall n,m \in \{1,\ldots,N\}\}$ and that $B$ is the unit ball in $L^2(\tilde{\Omega})$. Then, it is easy to see that $W_N \subset [B]^N$, so that $v_{POD} \in [B]^N$.
Thus, by an optimality argument, for any $\xiv \in \mathcal{G}$ it is valid that
\begin{equation*}
    \mathcal{J}_{CGA}(\xiv, \{v^{CGA}_n\}_{n=1}^{N}) = \min_{\{w_n\}_{n=1}^{N} \in [B]^N}  \mathcal{J}_{CGA}(\xiv, \{w_n\}_{n=1}^{N}) \le \mathcal{J}_{CGA}(\xiv, \{v^{POD}_n\}_{n=1}^{N}).  
\end{equation*}
Then, by integrating all the sides over $\mathcal{G}$, we derive the inequality
\begin{equation*}
    \int_{\mathcal{G}} \mathcal{J}_{CGA}(\xiv, \{v^{CGA}_n\}_{n=1}^{N}) d\xiv  \le \int_{\mathcal{G}} \mathcal{J}_{CGA}(\xiv, \{v^{POD}_n\}_{n=1}^{N}) d\xiv.
\end{equation*}
Moreover, by definition of $\mathsf{BAE}_{CGA}(N)$, we can state that,
\begin{equation*}
    \mathsf{BAE}_{CGA}(N)\le \int_{\mathcal{G}} \mathcal{J}_{CGA}(\xiv, \{v^{POD}_n\}_{n=1}^{N}) d\xiv.
\end{equation*}
Finally, by using the definition of POD and CGA optimization problems (Eqs. \eqref{eq:POD-optimization-problem} and \eqref{eq:CGA-optimization-problem}), for any $\{w_n\}_{n=1}^N \in W_N \subset [B]^N$, it is valid that
\begin{equation*}
    \int_{\mathcal{G}} \mathcal{J}_{CGA}(\xiv, \{w_n\}_{n=1}^{N}) d\xiv = \mathcal{J}_{POD}(\{w_n\}_{n=1}^{N}).
\end{equation*}
Thus, we can conclude that
\begin{equation*}
     \mathsf{BAE}_{CGA}(N)\le \int_{\mathcal{G}} \mathcal{J}_{CGA}(\xiv, \{v^{POD}_n\}_{n=1}^{N}) d\xiv = \mathcal{J}_{POD}(\{v^{POD}_n\}_{n=1}^{N}) = \mathsf{BAE}_{POD}(N).
\end{equation*}
\end{proof}

The better quality of the CGA basis functions when compared to the POD modes becomes evident in the cases where the geometrical parametrization is the only cause of the slow (polynomial) POD eigenvalue decay \cite{franco2024deep}, which entails that $\mathcal{E}_{POD}(N) = O(N^{-\alpha})$ for some $\alpha > 0$. On the other hand, if we consider the CGA approach involving \textit{ad hoc} basis functions for each geometrical parameter instance, the latter scenario entails that $\mathcal{J}_{CGA}(\xiv, \{v^{CGA}_n\}_{n=1}^{N}) = O(e^{-\beta N})$ for some $\beta > 0$, independently of  $\xiv \in \mathcal{G}$. Then, by definition, $\mathcal{E}_{CGA}(N) = O(e^{-\beta N})$, so that we can conclude that a small amount of CGA basis functions is sufficient to capture the solution manifold variability in this scenario, where a large amount of POD modes are necessary to obtain the same level of compression.

\subsection{Modeling CGA basis functions with neural networks}
\label{subsec:modeling-nn}
Despite having proved better compressive capabilities when compared to POD modes, in practical applications ``optimal" CGA basis functions are not available in a closed form, that is, the minimum in \eqref{eq:CGA-optimization-problem} cannot be explicitly attained, and needs to be suitably approximated. In this respect, within the present section we aim at providing a way to surrogate such CGA basis functions with neural networks. Specifically, the main existence result consists in proving that it is possible to construct a suitable neural network $(\bx,\xiv) \mapsto \{\hat{v}_n^{CGA}(\bx,\xiv)\}_{n=1}^N \approx \{\mathcal{Z}_{\xiv}(v^{CGA}_n(\bx,\xiv))\}_{n=1}^N$ such that the associated CGA projection error is arbitrarily close to the minimum in \eqref{eq:CGA-optimization-problem}, up to a prescribed tolerance. More formally,
\begin{proposition}
\label{prop:nn-appx}
    We let $N < \infty$ and assume that $(\bx,\xiv) \mapsto \{v^{CGA}_n(\bx,\xiv)\}_{n=1}^N \in W^{1,2}(\Omegaref \times \mathcal{G})$. Then, there exists $\varepsilon^* > 0$ such that, for any $\varepsilon \in (0,\varepsilon^*)$, it is possible to design a neural network architecture $(\bx,\xiv) \mapsto \{\hat{v}^{CGA}_n(\bx,\xiv)\}_{n=1}^N \approx \{\mathcal{Z}_{\xiv}(v^{CGA}_n(\bx,\xiv))\}_{n=1}^N$ so that
    \begin{equation*}
        \int_{\mathcal{G}} \mathcal{J}_{CGA}(\xiv, \{\mathcal{Z}^{-1}_{\xiv}(\hat{v}^{CGA}_n(\xiv))\}_{n=1}^{N}) d\xiv < \mathsf{BAE}_{CGA}(N) + \varepsilon.
    \end{equation*}
\end{proposition}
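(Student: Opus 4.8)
The plan is to reduce the statement to two ingredients: a universal approximation result on a fixed domain, and a local stability (Lipschitz) estimate for the map $\{w_n\} \mapsto \mathcal{J}_{CGA}(\xiv,\{w_n\})$. The starting observation is that, by the change-of-variable formula of \S\ref{subsec:dim-red-setting}, $\mathcal{Z}_\xiv$ and $\mathcal{Z}^{-1}_\xiv$ are isometries between $L^2_\zeta(\Omega(\xiv))$ and $L^2(\Omegaref)$; hence building $\hat v^{CGA}_n \approx \mathcal{Z}_\xiv(v^{CGA}_n)$ in $L^2_\zeta(\Omega(\xiv))$ is \emph{equivalent} to producing $w^{NN}_n(\xiv) := \mathcal{Z}^{-1}_\xiv(\hat v^{CGA}_n(\xiv))$ that approximate the optimal minimizers $v^{CGA}_n(\xiv)$ of Lemma~\ref{lemma:CGA-functional} in $L^2(\Omegaref)$. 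Since every term entering $\mathcal{J}_{CGA}$ is built from $L^2(\Omegaref)$ inner products, it suffices to control $\|w^{NN}_n(\xiv)-v^{CGA}_n(\xiv)\|_{L^2(\Omegaref)}$ in a suitable averaged sense over $\mathcal{G}$.

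First I would invoke universal approximation. The composite $(\bx,\xiv)\mapsto \mathcal{Z}_\xiv(v^{CGA}_n)(\bx)=v^{CGA}_n(Z(\bx,\xiv),\xiv)$ inherits $W^{1,2}$ regularity on the bounded set $\{(\bx,\xiv):\bx\in\Omega(\xiv),\,\xiv\in\mathcal{G}\}$ from the hypothesis $v^{CGA}_n\in W^{1,2}(\Omegaref\times\mathcal{G})$ together with $Z$ being a Lipschitz $C^r$-diffeomorphism. Standard density of neural networks in $W^{1,2}$ over a fixed bounded domain then yields, for every $\delta>0$, a network $\hat v^{CGA}_n$ with $\|\hat v^{CGA}_n-\mathcal{Z}_\xiv(v^{CGA}_n)\|_{L^2_\zeta(\Omega(\xiv))}$ controlled, and hence, by the isometry and Fubini, $\int_\mathcal{G}\|w^{NN}_n(\xiv)-v^{CGA}_n(\xiv)\|^2_{L^2(\Omegaref)}\,d\xiv<\delta^2$.

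Next comes the stability estimate, which I expect to be the crux. Writing $\tilde u=\mathcal{Z}^{-1}_\xiv(u)$ and $R(g,\{w_n\})=g-\sum_n (g,w_n)_{L^2(\Omegaref)}w_n$, the integrand of $\mathcal{J}_{CGA}$ is $\|R(\tilde u(t,\muv),\{w_n\})\|^2_{L^2(\Omegaref)}$, a quartic polynomial in the $w_n$. Expanding the telescoping difference $R(g,\{w^{NN}_n\})-R(g,\{v^{CGA}_n\})=-\sum_n[(g,w^{NN}_n)w^{NN}_n-(g,v^{CGA}_n)v^{CGA}_n]$ and repeatedly applying Cauchy--Schwarz gives, for each parameter value, a bound of the form $C(\xiv)\sum_n\|w^{NN}_n(\xiv)-v^{CGA}_n(\xiv)\|_{L^2(\Omegaref)}$, where $C(\xiv)$ collects $\|\tilde u(\xiv)\|^2_{L^2(\mathcal{T}\times\mathcal{P};L^2(\Omegaref))}$ and the norms $\|w^{NN}_n(\xiv)\|,\|v^{CGA}_n(\xiv)\|$. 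Using that $\|v^{CGA}_n(\xiv)\|_{L^2(\Omegaref)}=1$ (the minimizers lie on the unit sphere $B$) and that $\tilde{\mathcal{S}}$ is bounded in $L^2(\Omegaref)$, the factor $C(\xiv)$ stays controlled as long as the perturbation remains small, which is precisely where the threshold $\varepsilon<\varepsilon^*$ enters.

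The hard part will be turning the pointwise-in-$\xiv$ Lipschitz bound into one governed only by the averaged quantity $\int_\mathcal{G}\|w^{NN}_n-v^{CGA}_n\|^2_{L^2(\Omegaref)}\,d\xiv<\delta^2$, since the quartic structure also produces higher-order terms $\|w^{NN}_n(\xiv)-v^{CGA}_n(\xiv)\|^k$ with $k\ge2$ whose integrals over $\mathcal{G}$ are not immediately dominated by an $L^2$-in-$\xiv$ control. I would resolve this either by keeping the perturbation uniformly small so that $\|w^{NN}_n(\xiv)\|\le 2$ for a.e.\ $\xiv$ and the linear term dominates, or by normalizing the networks onto $B$ so that only the linear contribution survives; boundedness of $\tilde{\mathcal{S}}$ and the $W^{1,2}$ assumption make this admissible. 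Integrating the resulting estimate over $\mathcal{G}$ and applying Cauchy--Schwarz yields $\int_\mathcal{G}|\mathcal{J}_{CGA}(\xiv,\{w^{NN}_n\})-\mathcal{J}_{CGA}(\xiv,\{v^{CGA}_n\})|\,d\xiv\le\tilde C\,\delta$ for some $\tilde C=\tilde C(N,\tilde{\mathcal{S}},|\mathcal{G}|)$. Choosing $\delta<\varepsilon/\tilde C$ and recalling $\mathsf{BAE}_{CGA}(N)=\int_\mathcal{G}\mathcal{J}_{CGA}(\xiv,\{v^{CGA}_n\})\,d\xiv$ gives the claimed inequality, with $\varepsilon^*$ being exactly the threshold below which the local Lipschitz regime is valid.
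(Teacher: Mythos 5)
Your overall architecture matches the paper's: approximate the morphed basis functions $(\bx,\xiv)\mapsto\mathcal{Z}_{\xiv}(v^{CGA}_n)$ by neural networks in an averaged $L^2$ sense, transfer back to $L^2(\Omegaref)$ via the change-of-variable isometry, and then control the resulting perturbation of $\mathcal{J}_{CGA}$, with $\varepsilon^*$ marking the regime where $\|\hat v^{CGA}_n\|\le 2$. Two points, however. First, the step you flag as ``the hard part'' is precisely where the paper's argument diverges from yours, and your proposed resolutions do not quite close it. The paper never expands the quartic functional: it estimates the difference of the \emph{square roots}, $\bigl[\int_{\mathcal{G}}\mathcal{J}_{CGA}(\xiv,\{\mathcal{Z}^{-1}_{\xiv}(\hat v^{CGA}_n)\})\bigr]^{1/2}-\bigl[\mathsf{BAE}_{CGA}(N)\bigr]^{1/2}$, via the reverse triangle inequality; the difference of the two residuals is then genuinely \emph{linear} in $\hat v^{CGA}_n-v^{CGA}_n$ after the telescoping $(g,\hat w)\hat w-(g,w)w=(g,\hat w-w)\hat w+(g,w)(\hat w-w)$, so no higher-order terms ever appear, and only the product norm $\|\hat v^{CGA}_n\|_{L^2(\Omegaref\times\mathcal{G})}\le 2$ is needed --- which \emph{does} follow from the averaged approximation bound by the triangle inequality. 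By contrast, your first fallback (``$\|w^{NN}_n(\xiv)\|\le 2$ for a.e.\ $\xiv$'') asks for pointwise-in-$\xiv$ control that an $L^2(\mathcal{G})$-averaged approximation guarantee does not provide, and your second (projecting the networks onto $B$) changes the architecture and still leaves the quartic expansion to be integrated. The final squaring step is also where the second constraint in $\varepsilon^*$, namely $\varepsilon^*\le 1/(2\,\mathsf{BAE}_{CGA}(N)^{1/2})$, is consumed. Second, a smaller omission: the universal approximation theorem must be applied on a \emph{fixed} domain, whereas the target lives on the $\xiv$-dependent set $\{(\bx,\xiv):\bx\in\Omega(\xiv)\}$; the paper handles this by zero-extending $\mathcal{Z}_{\xiv}(v^{CGA}_n)$ to $\Omega_{\mathcal{G}}\times\mathcal{G}$ with $\Omega_{\mathcal{G}}=\overline{\cup_{\xiv}\Omega(\xiv)}$ and checking the extension stays in the required Sobolev class (using uniform bounds $0<m\le\zeta\le M$ on the Jacobian to pass between the weighted and unweighted norms), a construction your sketch elides.
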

\noindent The proof is contained in Appendix \ref{sec:appendix-proofs}. Also, we point out that within this work, for the sake of simplicity, we require $(\bx,\xiv) \mapsto \{v^{CGA}_n(\bx,\xiv)\}_{n=1}^N$ to be sufficiently regular and we refer the interested reader to  \cite{franco2024measurabilitycontinuityparametriclowrank} for a detailed and broad analysis concerning the regularity of parametric low-rank approximation. Nonetheless, we emphasize that the map 
\[
(\bx,\xiv) \mapsto \{\hat{v}_n^{CGA}(\bx,\xiv)\}_{n=1}^N  \approx \{\mathcal{Z}_{\xiv}(v^{CGA}_n(\bx,\xiv))\}_{n=1}^N
\]
approximates both the geometry-aware basis functions and the action of the morphing operator $\mathcal{Z}_{\xiv}$, so that we do not require the diffeomorphism to be explicitly known in practical applications.

As a final remark, we highlight that we focus solely on the analysis of $\mathcal{J}_{CGA}$ and, by extension, of $\mathsf{BAE}_{CGA}$. Indeed, we neglect the contribution of the approximation capabilities of the reduced network and the decoder, and the effectiveness of the representation entailed by the encoder, whose analyses are beyond the purpose of the present work; for further details on this latter aspect, we refer the reader to \cite{bhattacharya2021, brivio2024error, franco2023deep, lanthaler2022error}. Notwithstanding, it is possible to show that $\mathsf{BAE}_{CGA}$ is a lower bound for the approximation error, that is, through an optimality argument we obtain that
\begin{equation*}
\begin{aligned}
    \mathcal{E}_{appx} :&= \|u - \hat{u}\|_{L^2(\mathcal{T} \times \mathcal{P} \times \mathcal{G};L^2_{\zeta}(\Omega(\xiv))} = \bigg\|u - \mathcal{V}_{\xiv} \circ \psi \circ \phi(t, \muv, \xiv) \bigg\|_{L^2(\mathcal{T} \times \mathcal{P} \times \mathcal{G};L^2_{\zeta}(\Omega(\xiv)))} \\
    & = \bigg\|u -  \sum_{n=1}^{N} (u, \mathcal{Z}_{\xiv}(\hat{v}^{CGA}_n))_{L^2(\Omega(\xiv))} \mathcal{Z}_{\xiv}(\hat{v}^{CGA}_n) \bigg\|_{L^2(\mathcal{T} \times \mathcal{P} \times{G};L^2_{\zeta}(\Omega(\xiv)))} \\
    & = \bigg[\int_{\mathcal{G}} \mathcal{J}_{CGA}(\xiv, \{\hat{v}^{CGA}_n\}_{n=1}^{N})^2 d\xiv \bigg]^{1/2} \\
    & \ge  \bigg[\int_{\mathcal{G}} \min_{\{v^{CGA}_n\}_{n=1}^{N} \in [B]^N} \mathcal{J}_{CGA}(\xiv, \{v^{CGA}_n\}_{n=1}^{N})^2 d\xiv \bigg]^{1/2} = \mathsf{BAE}_{CGA}(N),
\end{aligned}
\end{equation*}
where we recall $\psi, \phi$ are the decoder and the reduced network, respectively.

\section{Numerical experiments}
\label{sec:numerical-experiments}
Within this section we direct our efforts toward the implementation of numerical experiments supporting our theoretical findings and showcase the potential of the CGA-DL-ROM technique. More precisely,

\begin{itemize}
    \item[(i)] we motivate the usage of CGA basis functions by demonstrating their superiority in terms of compressive capability when compared to POD;
    \item[(ii)] we compare the CGA-DL-ROM with suitable baselines present in the literature;
    \item[(iii)] we test the accuracy of CGA-DL-ROM when tackling applications stemming from more-involved industrial benchmarks and challenges.
\end{itemize}
We remark that the accuracy of the predictions is measured by means of two relative error metrics, namely,
\begin{equation*}
\begin{aligned}
     &\mathcal{E}_{R} = \sum_{j=1}^{N_s^{test}} \frac{  \biggl( \sum_{i=1}^{N_h(\xiv_j)} (u(x_i;\muv_j, t_j, \xiv_j) - \hat{u}(x_i; \muv_j, t_j, \xiv_j))^2 \biggr)^{1/2}}{  \biggl( \sum_{i=1}^{N_h(\xiv_j)} u(x_i;\muv_j, t_j, \xiv_j)^2 \biggr)^{1/2}}, \\
     &E = \Bigg( \frac{ \sum_{j=1}^{N_s^{test}} \sum_{i=1}^{N_h(\xiv_j)} (u(x_i;\muv_j, t_j, \xiv_j) - \hat{u}(x_i; \muv_j, t_j, \xiv_j))^2}{   \sum_{j=1}^{N_s^{test}}\sum_{i=1}^{N_h(\xiv_j)} u(x_i;\muv_j, t_j, \xiv_j)^2} \Bigg)^{1/2},
\end{aligned}
\end{equation*}
where $\hat{u}$ is the approximation of the ground truth $u$. We stress that $\mathcal{E}_R$ is the most widely used metric in the field literature, while $E$ is a metric which is consistent with POD/CGA optimization problems. For this reason the latter is only employed to test the superiority of CGA basis functions with respect to POD modes.
Unless otherwise specified, { \em(i)} our numerical experiments are performed on a NVIDIA A100 80GB GPU, { \em(ii)} we employ Adam as optimizer, and { \em(iii)} we use $80\%$ of the generated data for training and the rest is split in equal parts for validation and testing. The ROM dimensions, the properties of the NN architectures as well as the computational costs entailed by training/testing stages for the considered test cases are summarized in Table \ref{tab_alldetails}.

\begin{center}
\setlength\extrarowheight{3pt}
\begin{small}
\begin{table}
\begin{tabular}{ | m{4.5em} | m{7em}| m{6.5em}| m{7em}| m{6.5em}| m{7em}|  } 
\hline
 &  & Stenosis  & Hyper-elasticity  & NS obstacle  & Heat exchanger \\
 &  & (\S \ref{subsec:stenosis}) &  (\S \ref{subsec:elasticity}) & (\S \ref{subsec:ns-obstacle}) & (\S \ref{subsec:heat-exchanger})\\
\hline
\multirow{2}{4.5em}{ROM dimensions} & latent dim. $l$ & 4 & 20 & 25 & 17\\ 
& red. dim. $N$ & 4 & 30 & 60 & 70\\ 
\hline
\multirow{4}{4.5em}{Archite\-cture specifics} & $\{\hat{v}_n^{CGA}\}_{n=1}^N$ & $\textnormal{RDense}(120,10)$ & $\textnormal{RDense}(180,11)$ & $\textnormal{RDense}(220,10)$ & $\textnormal{RDense}(150,10)$\\ 
& encoder $\psi^\dagger$ & $\textnormal{Dense}(150,5)$ & $\textnormal{Dense}(150,5)$ & $\textnormal{Conv2d}(4)$ & $\textnormal{Dense}(50,5)$\\ 
& decoder $\psi$ & $\textnormal{Dense}(150,5)$ & $\textnormal{Dense}(150,5)$ & $\textnormal{Conv2dT}(4)$ & $\textnormal{Dense}(150,5)$\\ 
& red. network $\phi$ & $\textnormal{Dense}(50,5)$ & $\textnormal{Dense}(50,5)$ & $\textnormal{Dense}(100,5)$ & $\textnormal{Dense}(80,5)$\\ 
\hline
\multirow{4}{4.5em}{Computa\-tional \\ burden} &\# NN weights & $196k$ & $397k$ & $943k$ & $298k$ \\ 
& Training time & $0.3h$ & $0.8h$ & $3h$ & $4h$ \\ 
& Inference time (per instance) & $1ms$ & $0.5ms$ & $1ms$ & $5ms$ \\
\hline
\multirow{4}{4.5em}{Dataset specifics} &\# samples $N_s$ & $1000$ & $2000$ & $100$ & $100$ \\ 
&splitting & $80\%/10\%/10\%$ & $50\%/40\%/10\%$ & $81\%/9\%/10\%$ & $80\%/10\%/10\%$ \\ 
& \# time steps $N_t$ & $-$ & $-$ & $300$ & $60$ \\ 
& \# dofs $N_h$ & $7.8k$ & $972$ & $31k \div 34k$ & $100k\div110k$ \\
\hline
\end{tabular}
\captionof{table}{Hyperparameters of best results obtained with CGA-DL-ROM in every experiment. We denote with $\textnormal{Dense}(w,d)$ the dense network of $d$ layers with maximum width $w$ and with $\textnormal{RDense}(w,d)$ the same architecture with residual connections. On the other hand, $\textnormal{Conv2d}(d)$ ($\textnormal{Conv2dT}(d)$) indicates a convolutional block consisting of $d$ convolutional (transposed convolutional) layers and two dense layers at input and output to correctly match input and output dimensions. }
\label{tab_alldetails}
\end{table}
\end{small}
\end{center}

\subsection{Stationary flow in a stenotic channel}
\label{subsec:stenosis}
We first consider a benchmark test case to demonstrate the superior compressive and generalization capabilities of the proposed CGA-DL-ROM architecture when compared to its POD counterpart. In this respect, we immediately stress that if we replace CGA basis functions with POD modes, we obtain the POD-DL-ROM architecture \cite{fresca2022poddlrom}, which indeed is the baseline we would like to compare our novel approach with.

The present numerical experiment involves a fluid flowing in a 2D blood vessel over a stenosis and is usually employed as an extremely simplified model for studying coronary artery diseases, which are of great interest in haemodynamics \cite{siena2023data}. Specifically, we revisit the numerical experiment \textit{6.4} of \cite{manzoni2012reduced}, namely,
\begin{equation}
    \left\{
    \begin{aligned}
        & - \nu \Delta \bm{u} + (\bm{u} \cdot \nabla) \bm{u} + \nabla p = \mathbf{0} ,& \mbox{in} \ \Omega(\xiv)\\
         & \nabla \cdot \bm{u} = 0,& \mbox{in} \ \Omega\\
        &\bm{u} = [Ay(1-y),0,0], & \mbox{on} \  \Gamma_{IN}\\
        &\bm{u} = \mathbf{0}, & \mbox{on} \  \Gamma_{W}(\xiv)\\
        &-p\bm{n} + \nu(\nabla\bm{u})\bm{n} = \mathbf{0}, & \mbox{on} \  \Gamma_{OUT},
    \end{aligned}
    \right.
\end{equation}
where $\Gamma_{IN}$ and $\Gamma_{OUT}$ are the inlet and outlet of the blood vessel, whereas $\Gamma_{W}(\xiv) = \partial\Omega(\xiv) \setminus (\Gamma_{IN} \cup \Gamma_{OUT})$. Moreover, we emphasize that the problem is physically parametrized by the fluid viscosity $\nu \in [0.002, 0.004]$ and the inlet magnitude $A \in [2,4]$.
We parametrize the geometry of the artery's lower wall by means of the function
\begin{equation*}
    w(y; \xiv) = 
    \left\{
    \begin{array}{ll}
        \xi_1 \cos\bigg(\frac{\pi}{2\xi_2}(y - \xi_3)\bigg), & x \in (\xi_3 - \xi_2, \xi_3 + \xi_2) \\
        0, & \textnormal{otherwise},
    \end{array}
    \right.
\end{equation*}
where $\xiv = (\xi_1, \xi_2, \xi_3) \in [0.25,0.4] \times [0.5, 0.75] \times [2, 3]$ collects the geometrical parameters which regulate the stenosis severity and position (see Fig. \ref{fig:stenosis-sample-mesh} for more details).

\begin{figure}[htb!]
    \centering
    \includegraphics[width=0.75\textwidth]{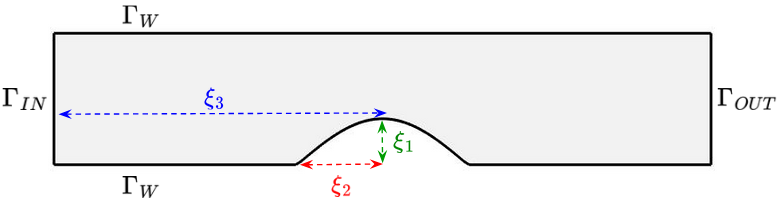}
    \caption{Stenosis test case: visualization of the effect of the geometrical parameters on the domain shape.}
    \label{fig:stenosis-sample-mesh}
\end{figure}

We generate a set of $N_s=1000$ samples collecting only the $x$-velocity and $y$-velocity components, which are the fields that we would like to reconstruct. To do that, we make use of \texttt{FEniCS} \cite{langtangen2017solving} and \texttt{gmsh} \cite{gauzaine2017gmsh}, employing a mixed $\mathbb{P}^1\textnormal{b}-\mathbb{P}^1$ finite element space and a Newton solver to deal with the nonlinearity. Generating all the samples takes $35$ minutes on a Intel Core i7 13th gen 32GB RAM laptop. 

We remark that, for the sake of simplicity, we make use of a \textit{fixed-resolution} approach and a RBF strategy to morph the meshes, thus entailing a total of $N_h=7600$ dofs for each sample. The latter assumption simplifies the comparison of CGA-DL-ROM with its POD counterpart because it is possible to pre-compute POD basis functions by means of SVD. 
Specifically, we train for $500$ epochs a series of CGA-DL-ROMs and POD-DL-ROMs to reconstruct both the velocity components of for varying reduced dimension $N \in \{2^m : m = 0, \ldots, 6\}$ and evaluate their accuracy on the test set. Also, we set the latent dimension $l = \min(N,10)$. From the analysis' results displayed in in Fig. \ref{fig:stenosis-analysis-N}, it is evident that CGA-DL-ROMs are more accurate than POD-DL-ROMs for any value of $N$: this behaviour is particularly evident for lower values of $N$, thus corroborating our theoretical derivations. We also point out that $E(N)$ reaches a plateau for both the analyzed paradigms. However, CGA-DL-ROM saturates earlier ($N \ge 4$) than POD-DL-ROM ($N \ge 16$) and the plateau value of CGA-DL-ROM is $3\times$ smaller than the one relative to the POD counterpart.

\begin{figure}[htb!]
    \hspace{2.3cm}
    \includegraphics[width=0.62\textwidth]{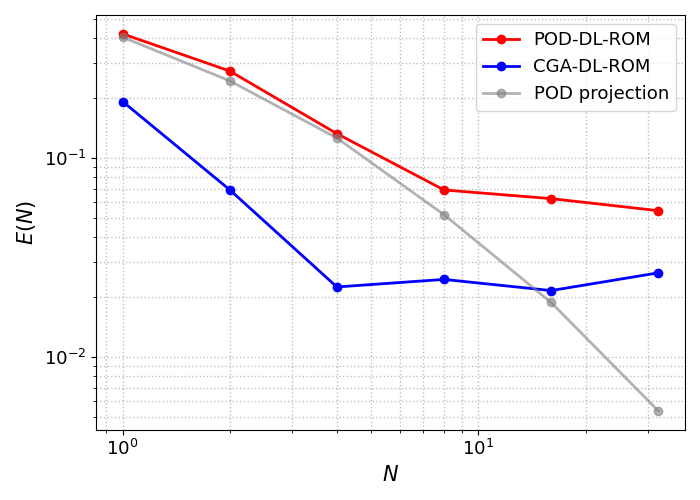}
    \caption{Stenosis test case: analysis of the compressive capabilities of CGA-DL-ROM and POD-DL-ROM. We also show the POD projection error for reference.}
    \label{fig:stenosis-analysis-N}
\end{figure}

\begin{figure}[htb!]
    \centering
    \includegraphics[width=0.99\textwidth]{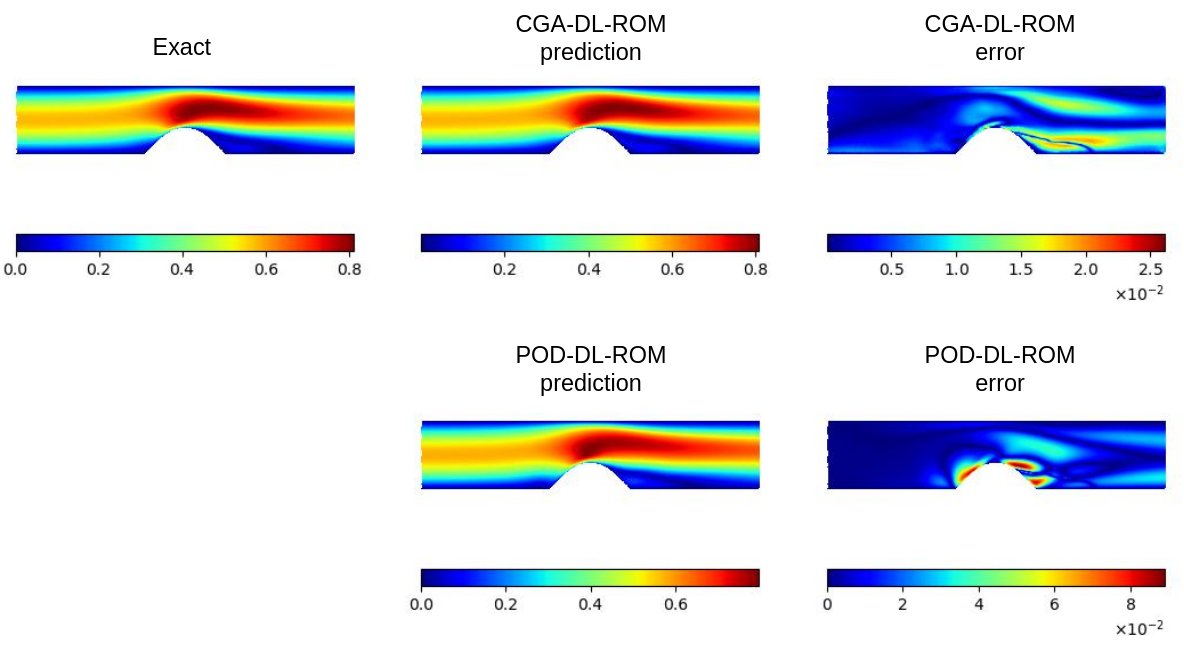}
    \caption{Stenosis test case: comparison of CGA-DL-ROM and POD-DL-ROM predictions and errors in the case of $N=4$.}
    \label{fig:comparison-CGA-POD}
\end{figure}

Finally, we emphasize that only a small number of CGA basis functions ($N=4$) is required to obtain a suitable accuracy. On the other hand, the same number of POD modes is not capable to suitably capture the solution manifold's variability. Indeed, POD-DL-ROM struggles especially in the proximity of the stenosis where the impact of the geometrical parameters is more evident (see Fig. \ref{fig:comparison-CGA-POD}).

\subsection{Hyper-elasticity equations}
\label{subsec:elasticity}
Within the present numerical experiment we aim at comparing the proposed approach with several baseline strategies present in the literature. To do that, we exploit the numerical experiment proposed in \cite{li2022fourier}, which is a common benchmark for geometrically parametrized problems. 
More precisely, the present numerical experiment involves an hyper-elastic material described by the solid body equation
\begin{equation*}
    \frac{\partial^2 \bm{u}}{\partial t^2} + \nabla \cdot \bm{\sigma} = \mathbf{0}
\end{equation*}
where $\bm{u}$ and $\bm{\sigma}$ are the displacement and the stress field, respectively, and the equation is endowed with a Rivlin-Saunders constitutive model. The body shape consists of a unit square with a parametric void at the center; the void is centered at the point $(0.5,0.5)$ and is characterized by the radius prior $r = 0.2 + \frac{0.2}{1 + \exp(r^*)}$ (where $r^* \sim \mathcal{N}(0,4^2(-\nabla + 3^3)^{-1})$, yielding a total of $42$ geometrical parameters per sample. The body is subject to the tensile traction $[0,100]$ imposed at the top edge $y=1$. Within the present test case we aim at reconstructing the stress given a geometry consisting of a point cloud and a geometrical parameter representing the void topology.

\begin{table}[ht!]
\centering
\setlength\extrarowheight{5pt}
\begin{tabular}{ | m{3cm} | P{2.5cm}| P{2cm} | P{3cm} | } 
\hline
& Relative error & Latent space dimension & \# NN weights and biases (in millions)\\
\hline
CGA-DL-ROM & $\underline{\mathit{1.26 \times 10^{-2}}}$ & $\underline{\mathbf{20}}$ & $\underline{\mathbf{0.40}}$ \\
%$\underline{\mathbf{3.97 \times 10^5}}$ \\
Geo-FNO \cite{li2022fourier} & $2.20 \times 10^{-2}$ & 
$4608$ & $1.55$ \\
%$$1.55 \times 10^6$\\
GraphNO \cite{li2022fourier} & $1.26 \times 10^{-1}$ & $-$ & $\underline{\mathit{0.57}}$\\ % $\underline{\mathit{5.72 \times 10^5}}$\\
DeepONet \cite{li2022fourier} & $9.65 \times 10^{-2}$ & $\underline{\emph{256}}$ & $1.03$ \\
%$1.03 \times 10^6$ \\
DAFNO \cite{liu2023domain} & $ \underline{1.09 \times 10^{-2}}$ & $4608$ & $2.37$\\
%$2.37 \times 10^6$\\
Coral \cite{serrano2023operator} & $1.64 \times 10^{-2} $ & $\underline{128}$ & $\underline{0.53}$ \\
%$$\underline{5.29 \times 10^{5}}$\\
F-FNO \cite{tran2023factorized} & $ 1.74 \times 10^{-2}$ & $4608$ & $8.87$ \\
%$$8.87 \times 10^6$\\
GNOT \cite{hao2023gnot} & $\underline{\mathbf{8.65 \times 10^{-3}}}$ & $-$ & $> 2$\\
%$>2 \times 10^6$ \\
\hline
\end{tabular}
    \caption{Elasticity test case: comparison against baselines (less is better: \underline{\textbf{first}}, \underline{second}, \underline{\emph{third}}). The latent space dimension of FNO-based models is $l = \# \textnormal{modes}^{d} \times \textnormal{width}$, where $d$ is the number of spatial dimensions. The latent space of DeepONet is its number of basis functions. The reported results are taken from the associated paper and/or code.}
\label{table:elasticity-baselines}
\end{table}

\begin{figure}[h!]
    \centering
    \subfloat[Original domain]{
    \centering
    \includegraphics[width=0.99\linewidth]{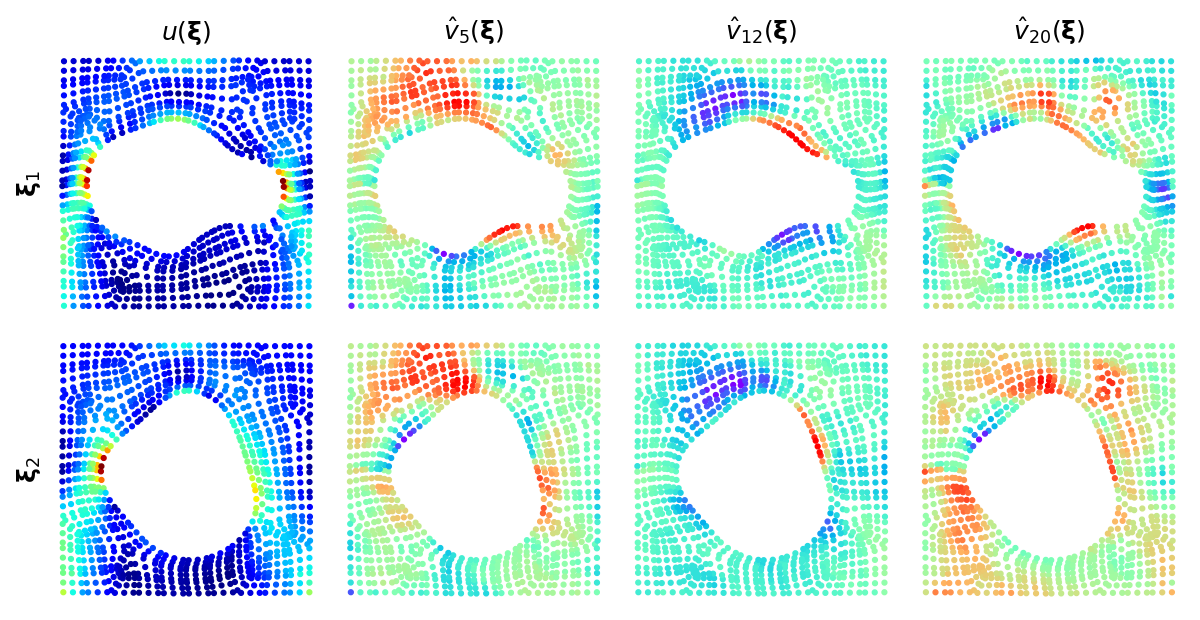}
    }
    \vspace{0.5cm}
    \subfloat[Reference domain]{
    \centering
    \includegraphics[width=0.99\linewidth]{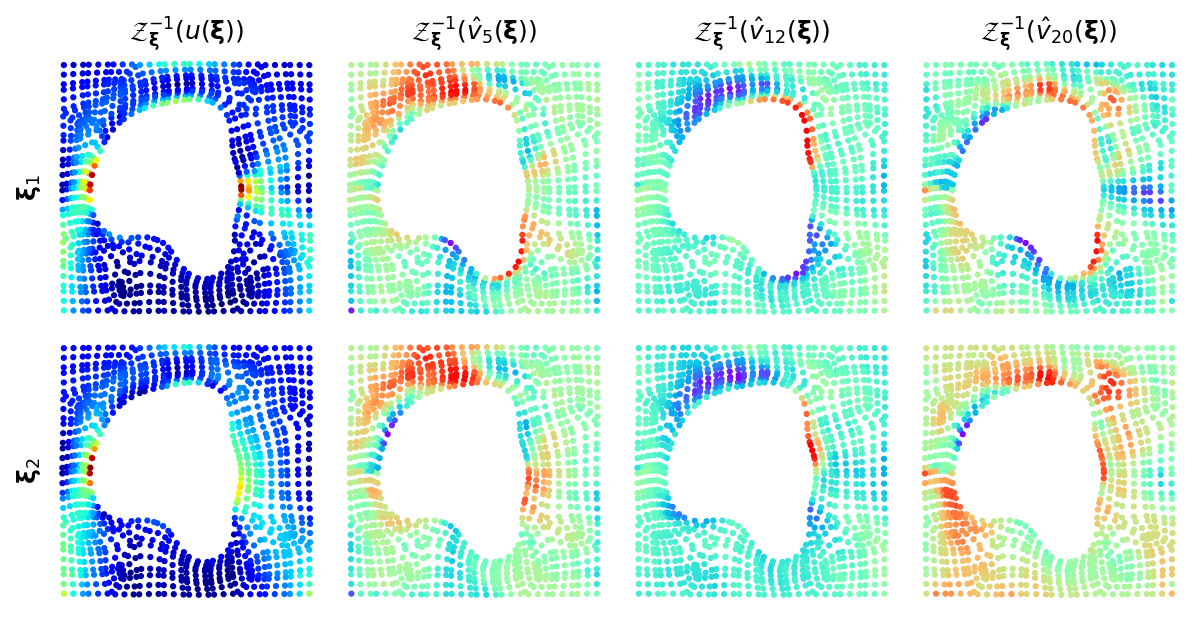}
    }
    \vspace{0.2cm}
    \caption{Hyper-elasticity test case: visualization of the ground truth {\em (first column)} and CGA basis functions {\em (other columns)} for two instances of the geometrical parameter {\bf (a)} on the original domain and {\bf (b)} morphed back on a reference domain.}
    \label{fig:basis-functions}
\end{figure}

\begin{figure}[htb!]
    \centering
    \includegraphics[width=0.94\textwidth]{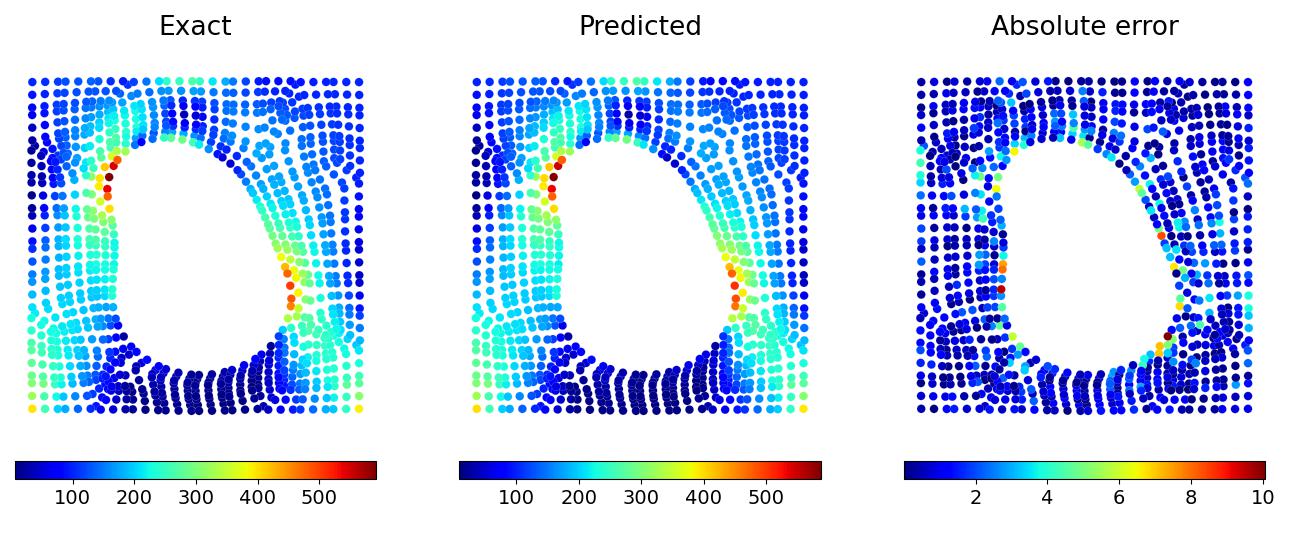}
    \caption{Hyper-elasticity test case: CGA-DL-ROM prediction and error for a test sample.}
    \label{fig:hyperelasticity_sample.png}
\end{figure}

We train CGA-DL-ROM on $1000$ data samples for $3000$ epochs with a batch size of $8$ and using a learning rate of $3 \times 10^{-4}$. 
Then, we compare our paradigm with the most widely used techniques in the literature, including
\begin{itemize}[itemsep=0.3pt]
    \item Neural operators based on learned deformations \cite{li2022fourier,tran2023factorized}, the first architectures specifically crafted for geometrically parametrized problems proposed in the literature;
    \item DAFNO \cite{liu2023domain}, which are based on a FNO core coupled with smoothed characteristic functions to describe the geometry, but are limited to uniform grids;
    \item GraphNO \cite{li2020multipole,li2022fourier} architectures, whose graph-based structure makes them suitable for dealing with parametrized geometries;
    \item DeepONets \cite{lu2021learning,li2022fourier}, which, in constrast to CGA-DL-ROMs, leverage on basis functions not depending on the geometrical parameter;
    \item Coral \cite{serrano2023operator}, a flexible mesh-free INR-based architecture showing promising results on several numerical experiments;
    \item GNOT \cite{hao2023gnot}, which leverages a specific attention mechanism to construct a strong inductive bias for a wide variety of operator learning tasks including (but not limited to) geometrically parametrized problems.
\end{itemize}

As shown in Table \ref{table:elasticity-baselines}, CGA-DL-ROMs are extremely efficient surrogate models. Indeed, they showcase an excellent accuracy on the test set (composed of 200 samples), almost on par with state-of-the-art paradigms, while having a smaller architecture (in terms of number of parameters) consisting of a remarkably low number of CGA basis functions ($N=30$) and an even lower latent dimension ($l=20$). On the other hand, transformer architectures (GNOT) deliver the best results in terms of accuracy, at the expense of heavy models. The only paradigm entailing a number of neural network parameters which is comparable with CGA-DL-ROM is Coral, but it features a worse accuracy and requires an inference-time optimization \cite{serrano2023operator}. We also note that modulating the basis functions by means of geometrical parameters is an effective strategy. Indeed, CGA-DL-ROMs are far more accurate (more than $7 \times$) than DeepONets, whose basis functions are independent of the geometrical parameter. From Fig \ref{fig:basis-functions}, we can qualitatively assess that CGA basis functions are deeply connected with the local features that the solution field exhibits in the proximity of the void, where the effect of the geometrical parameterization is particularly intense. Moreover, we further validate qualitatively the approximation and generalization capabilities of CGA-DL-ROM thanks to Fig. \ref{fig:hyperelasticity_sample.png}. Finally, we stress that, other than being suitably accurate and featuring a light architecture, the CGA-DL-ROMs are also versatile. Indeed, in contrast to \cite{li2022fourier, liu2023domain}, which often require practical extensions and data manipulation, the CGA-DL-ROM architecture can be used ``as is" to tackle a wide variety of geometrically parametrized problems, as shown in the next experiments.

\subsection{Navier-Stokes flow around an obstacle}
\label{subsec:ns-obstacle}
The present numerical experiment involves the reconstruction of the velocity field of a fluid flowing in a 2D rectangular channel $[0,3] \times [0,0.5]$ past a circular obstacle. The domain shape is geometrically parameterized by means of $\xiv = \{y_c, r_c\}$, where $y_c \in [0.15,0.35]$ and $r_c \in [0.05,0.1]$ are respectively the $y$-position of the center of the obstacle and the radius of the obstacle. The inlet boundary is $\Gamma_{IN} = \{(x,y) \in \partial{\Omega}(\xiv) : \bx = 0\}$, while the outlet boundary is $\Gamma_{OUT} = \{(x,y) \in \partial{\Omega}(\xiv) : x = 3\}$.
The problem formulation reads
\begin{equation}
    \left\{
    \begin{aligned}
        & \frac{\partial{\bm{u}}}{\partial t} - \nu \Delta \bm{u} + (\bm{u} \cdot \nabla) \bm{u} + \nabla p = \mathbf{0} ,& \mbox{in} \ \Omega(\xiv) \times (0,T]\\
        & \nabla \cdot \bm{u} = 0 ,& \mbox{in} \ \Omega(\xiv) \times (0,T]\\
        &\bm{u} = [h(y;A),0], & \mbox{on} \  \Gamma_{IN}(\xiv) \times (0,T]\\
        &\bm{u} = \mathbf{0}, & \mbox{on} \  \partial{\Omega(\xiv)} \setminus (\Gamma_{IN} \cup \Gamma_{OUT}) \times (0,T]\\
        &-p\bm{n} + \nu(\nabla\bm{u})\bm{n} = \mathbf{0}, & \mbox{on} \  \Gamma_{OUT}(\xiv) \times (0,T] \\
        & \bm{u}_0 = \bm{u}_{stokes} , & \mbox{in} \ \Omega(\xiv)\\
    \end{aligned}
    \right.
\end{equation}
where $T=0.8$ is the time horizon, $\bm{u}_{stokes}$ is the steady Stokes simulation on the same configuration, $\nu \in [1 \times 10^{-3}, 2 \times 10^{-3}]$ is the fluid viscosity and $h(y;A) = 4 A (0.5 - y) y$ is the inlet profile that is regulated by the maximum amplitude $A \in  [1.5^2,5\cdot1.5^2]$. For the sake of clarity, we represent the physical parameters with the vector $\muv = [A,\nu] \in \mathbb{R}^2$. We highlight that with the present choice of the physical parameters, since $Re \approx 550 \div 5625$, we witness vortex shedding phenomena for any of the sample instances, especially as $t$ approaches the time horizon. The high-fidelity simulations are obtained with \texttt{FEniCS} \cite{langtangen2017solving} and \texttt{gmsh} \cite{gauzaine2017gmsh} by discretizing the problem by means of $\mathbb{P}^2-\mathbb{P}^1$ couple of finite element spaces and by employing the Chorin-Temam splitting scheme, yielding a total of more than $3 \times 10^4$ dofs. For the time-advancing scheme, we chose $\Delta t = T / 3000$ to ensure the stability of the numerical solution, but we save the solution snapshots only each $10$ iterations in order to limit the memory footprint of the dataset, while retaining the essential features of the temporal evolution of the solution fields. The elapsed time to generate the high-fidelity dataset is $23h$ on a Intel Core i7 13th gen 32GB RAM laptop.

\begin{figure}[htb!]
    \centering
    \includegraphics[width=0.49\textwidth]{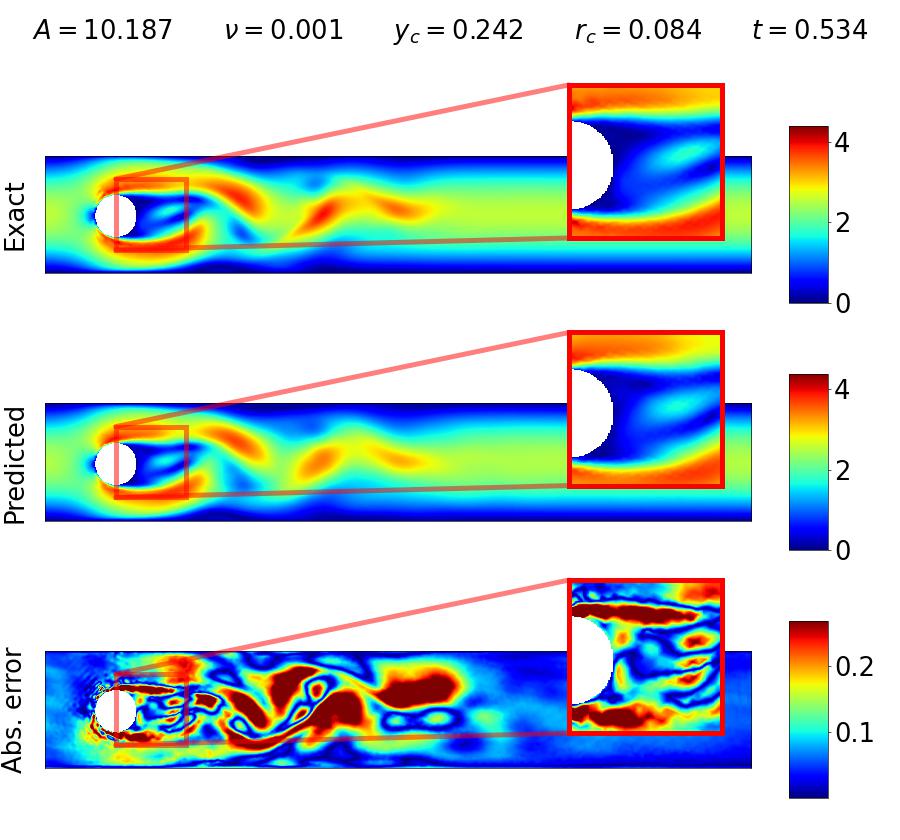}
    \includegraphics[width=0.49\textwidth]{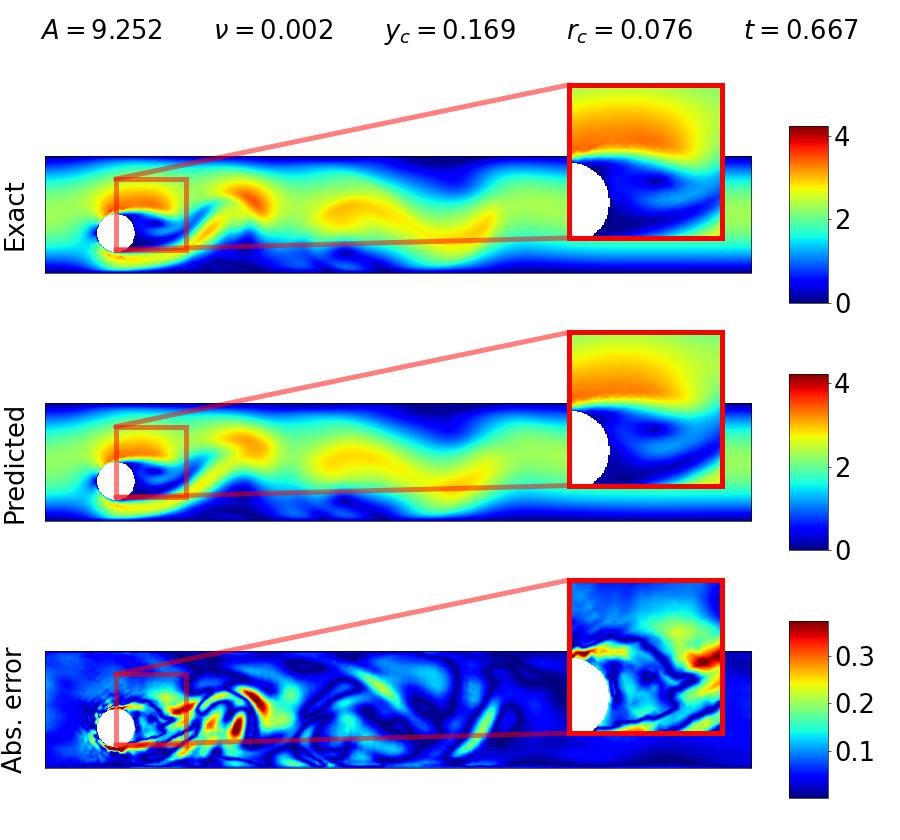}
    \caption{NS obstacle test case: accuracy of CGA-DL-ROM's prediction for two different test instances. By comparing the two instances, it is evident the large variability of the solution manifold entailed by the physical and geometrical parametrization.}
    \label{fig:ns_sample}
\end{figure}

\begin{figure}[htb!]
   \hspace{3.0cm}
   \includegraphics[width=0.55\textwidth]{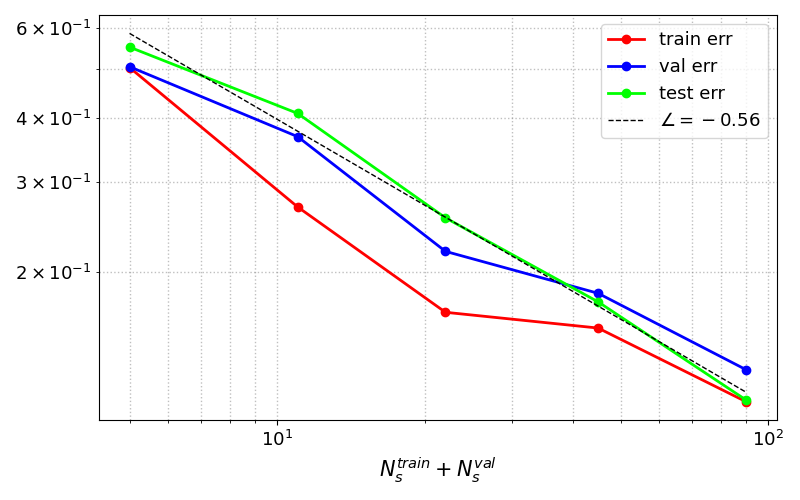}
  \caption{NS obstacle test case: ablation study with respect to the number of geometries available at training stage. The analysis is based upon the relative error metric $\mathcal{E}_R$.}
    \label{fig:ns-analysis-ablation-study}
\end{figure}

We train a CGA-DL-ROM for $65$ epochs, using a batch size of $15$ and a learning rate of $4 \times 10^{-4}$. We evaluate the accuracy of the trained network on a test set consisting of unseen geometries, obtaining $\mathcal{E}_R = 1.25 \times 10^{-1}$, which suggests a precise reconstruction of the parameter-to-solution map despite the fact that only $80$ different geometries are seen during the training phase. We also report in in Fig. \ref{fig:ns_sample} a visualization of the ROM prediction, highlighting that CGA-DL-ROM are able to suitably reconstruct even the low scale features occurring in the proximity of the obstacle's wake, which is a region of the domain that is heavily influenced by the geometrical parameters. We stress that Fig. \ref{fig:ns_sample} shows the prediction of two different test instances so that the reader can appreciate the accurate reconstruction of the solution manifold despite the extremely high variability entailed by the physical and geometrical parametrization.

Furthermore, we perform an ablation study with respect to the number of geometries that are available at the training stage. Specifically, we repeat the experiment hereinabove for $5$ times, halving the number of available training geometries at each step. For the sake of fairness, we use the same experiment configuration for all the tests, except for the number of training epochs, which is augmented to $100$ when $N_{s}^{train} + N_s^{val} \le 22$ to foster the convergence of the training algorithm. It is possible to observe from Fig. \ref{fig:ns-analysis-ablation-study} that the scaling law is nearly linear in the log-log space, suggesting that CGA-DL-ROMs efficiently take advantage of an increasing number of geometries to reach a better accuracy. Also, an insufficient number of geometries may lead to overfitting and often results in a non-negligible generalization gap (in this case, for $N_{s}^{train} + N_s^{val} \le 22$).

\subsection{Fluid temperature in a heat exchanger}
\label{subsec:heat-exchanger}
The purpose of this numerical experiment is to test the proposed framework with a relevant industrial benchmark involving a large-scale 3D setting. More precisely, we are interested in describing the temperature field of a fluid flowing through the heat exchanger. We remark that the present test case extends the numerical experiment proposed in \S 5 of \cite{brivio2024error} by introducing a geometrical parameterization. 

The heat exchanger shape $\Omega = \Omega(\xiv)$ is parametrized by a set of geometrical parameters $\xiv = \{\xi_k\}_{k=1}^{14}$ (for details, we refer the reader to Table \ref{table:heat-exchanger-geom-params}). The boundary of the heat exchanger is characterized by its three baffles $\Gamma_i(\xiv)$ (for $i=1,2,3$), the inlet $\Gamma_{IN}(\xiv)$, the outlet $\Gamma_{OUT}(\xiv)$ and the wall  $\Gamma_W(\xiv) = \partial\Omega(\xiv) \setminus (\cup_{j=1}^3 \Gamma_j(\xiv) \cup \Gamma_{IN}(\xiv) \cup \Gamma_{OUT}(\xiv))$.

 \begin{figure}[b!]
     \centering
     \includegraphics[width=0.65\linewidth]{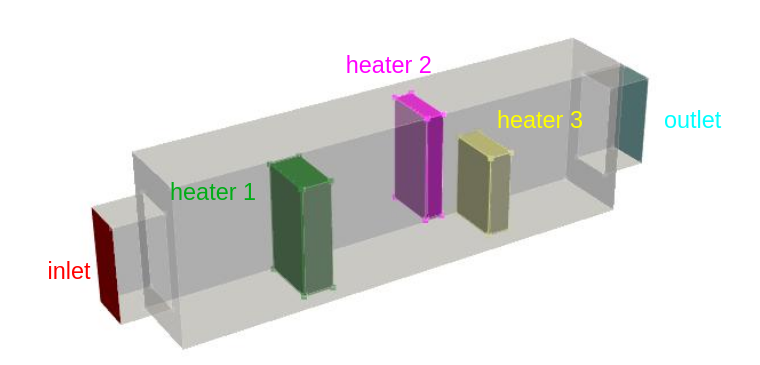}
     \caption{Heat exchanger test case: domain shape corresponding to an instance of the geometrical parameters.}
     \label{fig:enter-label}
 \end{figure}

\begin{table}[ht!]
\centering
\begin{tabular}{ |c|c|c| } 
\hline
Parameter & Description & Values \\ 
\hline
$\xi_1$ & $x$-position of the first heater & [0.5,0.9] \\ 
$\xi_2$ & length of the first heater & [0.1, 0.2] \\ 
$\xi_3$ & width of the first heater & [0.6, 0.7] \\ 
$\xi_4$ & height of the first heater & [0.3, 0.4] \\ 
$\xi_5$ & $x$-position of the second heater & [1.3, 1.7]  \\ 
$\xi_6$ & length of the second heater & [0.1, 0.2] \\ 
$\xi_7$ & width of the second heater & [0.6, 0.7] \\ 
$\xi_8$ & height of the first heater & [0.3, 0.4] \\ 
$\xi_9$ & $x$-position of the third heater & [2.1, 2.5] \\ 
$\xi_{10}$ & length of the third heater & [0.1, 0.2] \\ 
$\xi_{11}$ & width of the third heater & [0.45, 0.55] \\ 
$\xi_{12}$ & height of the third heater & [0.25, 0.35] \\ 
$\xi_{13}$ & width of the inlet and the outlet & [0.4, 0.6] \\ 
$\xi_{14}$ & height of the inlet and the outlet & [0.25, 0.35] \\ 
\hline
\end{tabular}
\caption{Heat exchanger test case: details on the geometrical parametrization}
\label{table:heat-exchanger-geom-params}
\end{table}

To generate the high-fidelity dataset, we model the temperature $u$ of the fluid with an advection-diffusion equation, namely,
\begin{equation}
    \left\{
    \begin{aligned}
        &\frac{\partial{u}}{\partial{t}} -  D \Delta u  + \bm{v} \cdot \nabla u = 0,& \mbox{in} \ \Omega(\xiv) \times (0,T] \\
        &u = \sum_{j=1}^3 g_j1_{\Gamma_j(\xiv)}, & \mbox{on} \  \cup_{j=1}^3\Gamma_j(\xiv) \times (0,T]\\
        &u = 0, & \mbox{on} \  (\Gamma_{IN}(\xiv) \cup \Gamma_{W}(\xiv)) \times (0,T]\\
        &\nabla u \cdot n = 0, & \mbox{on} \ \Gamma_{OUT}(\xiv) \times (0,T] \\
        &u_0 = 0, & \mbox{in} \  \Omega(\xiv),\\
    \end{aligned}
    \right.
\end{equation}
where $T=2$ is the time horizon, $D \in [0.01,0.1]$ is the thermal diffusivity of the fluid, while $g_1,g_2,g_3 \in [1,11]$ are the imposed temperatures on the corresponding baffle. We remark that $\bm{v}$ is the advection field due to the fluid flow, which is described by a stationary Navier-Stokes model, 
\begin{equation}
    \left\{
    \begin{aligned}
        & - \nu \Delta \bm{v} + (\bm{v} \cdot \nabla) \bm{v} + \nabla p = \mathbf{0} ,& \mbox{in} \ \Omega(\xiv)\\
         & \nabla \cdot \bm{v} = 0,& \mbox{in} \ \Omega\\
        &\bm{v} = [h(y,z;A),0,0], & \mbox{on} \  \Gamma_{IN}(\xiv)\\
        &\bm{v} = \mathbf{0}, & \mbox{on} \  \cup_{j=1}^3\Gamma_j \cup \Gamma_{W}(\xiv)\\
        &-p\bm{n} + \nu(\nabla\bm{v})\bm{n} = \mathbf{0}, & \mbox{on} \  \Gamma_{OUT}(\xiv),
    \end{aligned}
    \right.
\end{equation}
where $\nu$ is the viscosity of the fluid and
$$h(y,z;A) = 0.15^{-2} \cdot 16 A(0.75 - y)(y - 0.25)(0.4 - z)(z - 0.1).$$
represents the inlet profile regulated by the maximum amplitude $A \in [1,2]$. For the sake of readability, we collect the physical parameters in the vector $\muv = [A,\nu,g_1,g_2,g_3,D] \in \mathbb{R}^6$. We employ a three-step scheme for the computation of the high-fidelity solution, namely, for any parameter instance { \em{(i)} } we solve the Navier Stokes equations  with $\mathbb{P}^1\textnormal{b}-\mathbb{P}^1$ couple of Finite Element spaces, handling the nonlinearity with a Newton solver; we remark that we choose the Stokes solution on the same configuration as the initial guess of the nonlinear solver, { \em{(ii)} } we interpolate the velocity field onto a $\mathbb{P}^2$ space on the same mesh, and { \em{(iii)} } we solve the advection-diffusion problem with $\mathbb{P}^2$ elements. We emphasize that the entire synthetic data generation phase is implemented in \texttt{python}, using \texttt{FEniCS} \cite{langtangen2017solving} and \texttt{gmsh} \cite{gauzaine2017gmsh}, and takes $5h$ on a Intel Core i7 13th gen 32GB RAM laptop. 

\begin{figure}[h!]
    \centering
    \includegraphics[width=0.49\linewidth]{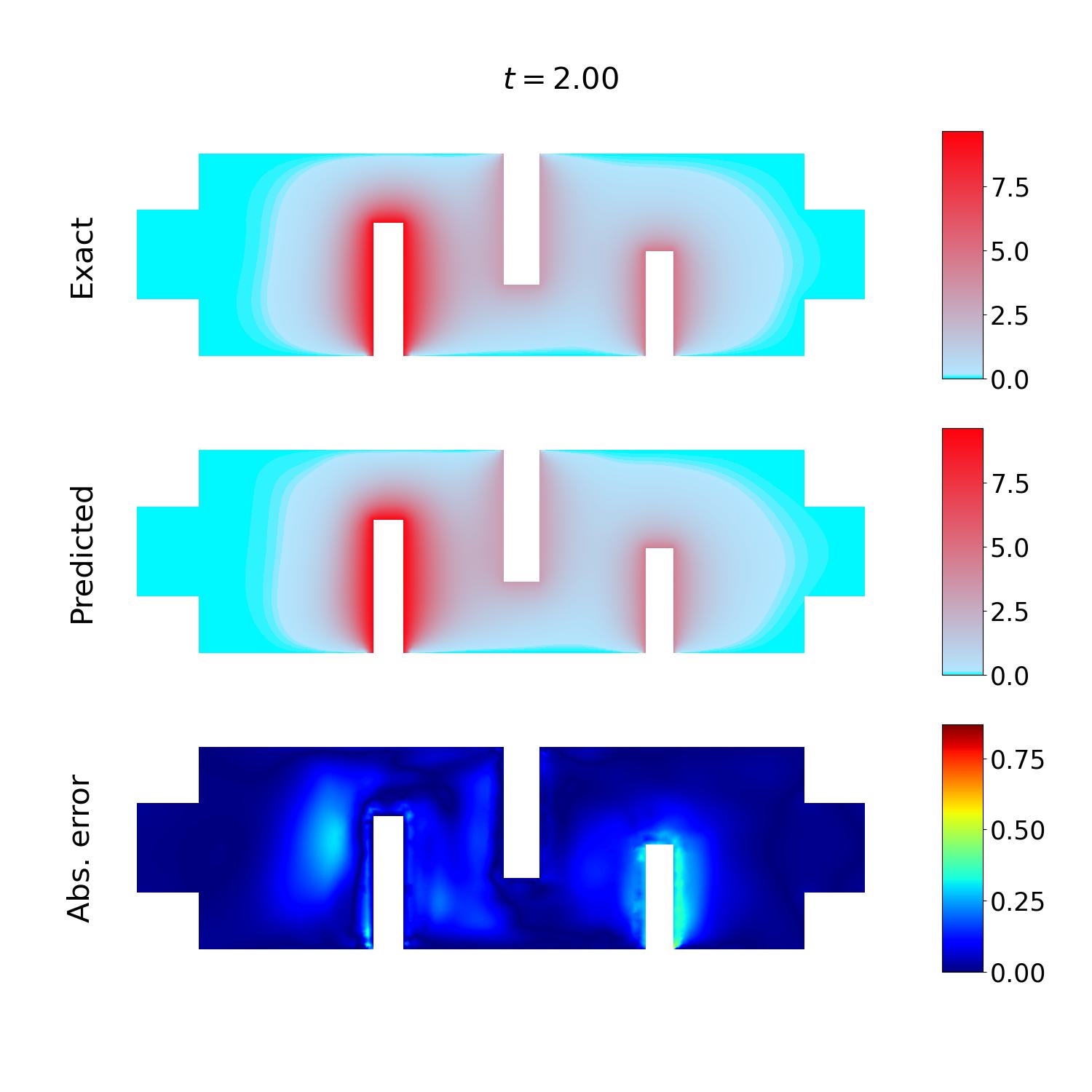}
    \includegraphics[width=0.49\linewidth]{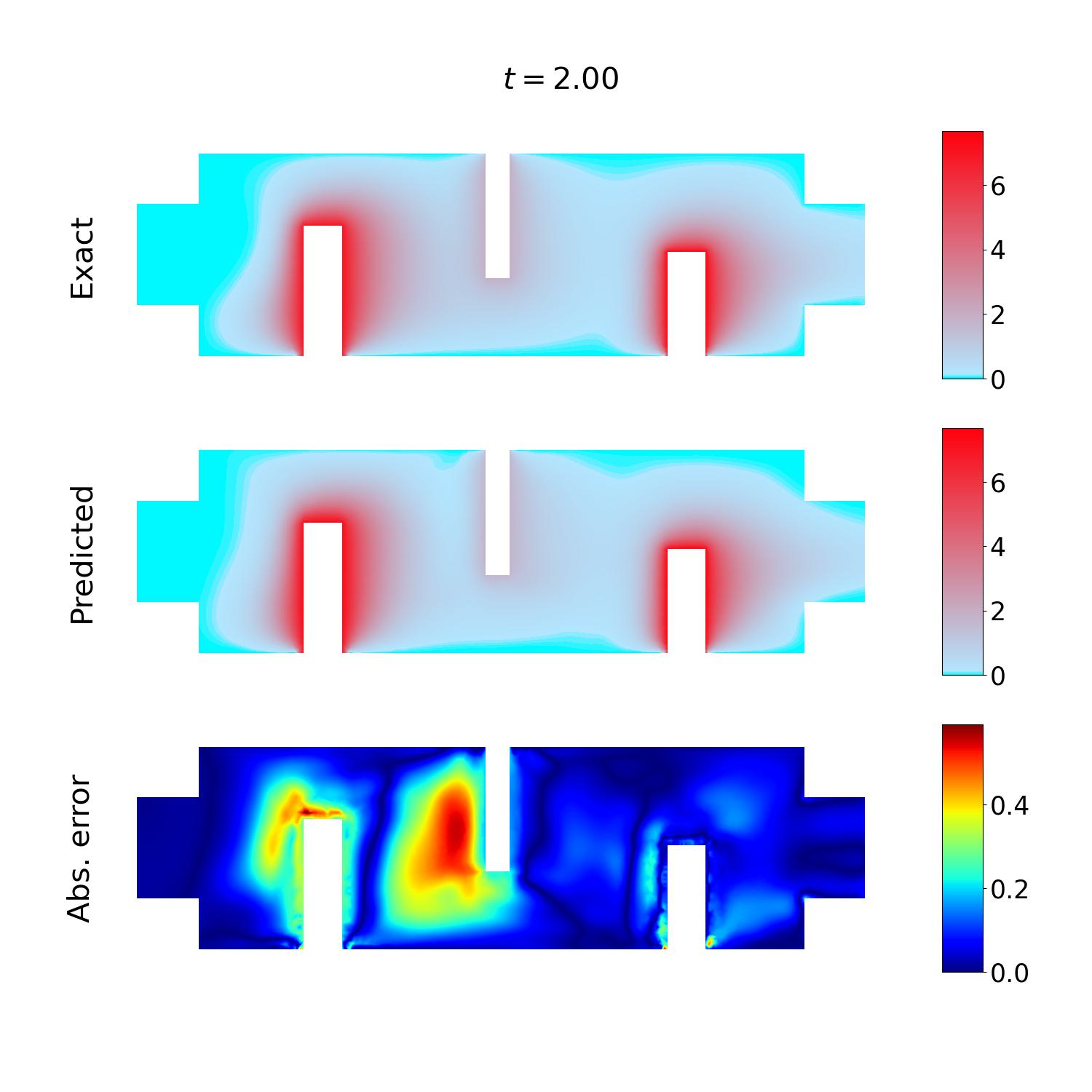}
    \caption{Heat exchanger test case: comparison between CGA-DL-ROM's prediction and ground truth on the plane $z=0.25$ at $t=2s$. We emphasize the notable effects of the geometrical parametrization on the domain shape.}
    \label{fig:mixer3d-visualization}
\end{figure}

\begin{figure}[h!]
    \centering
    \includegraphics[width=0.99\linewidth]{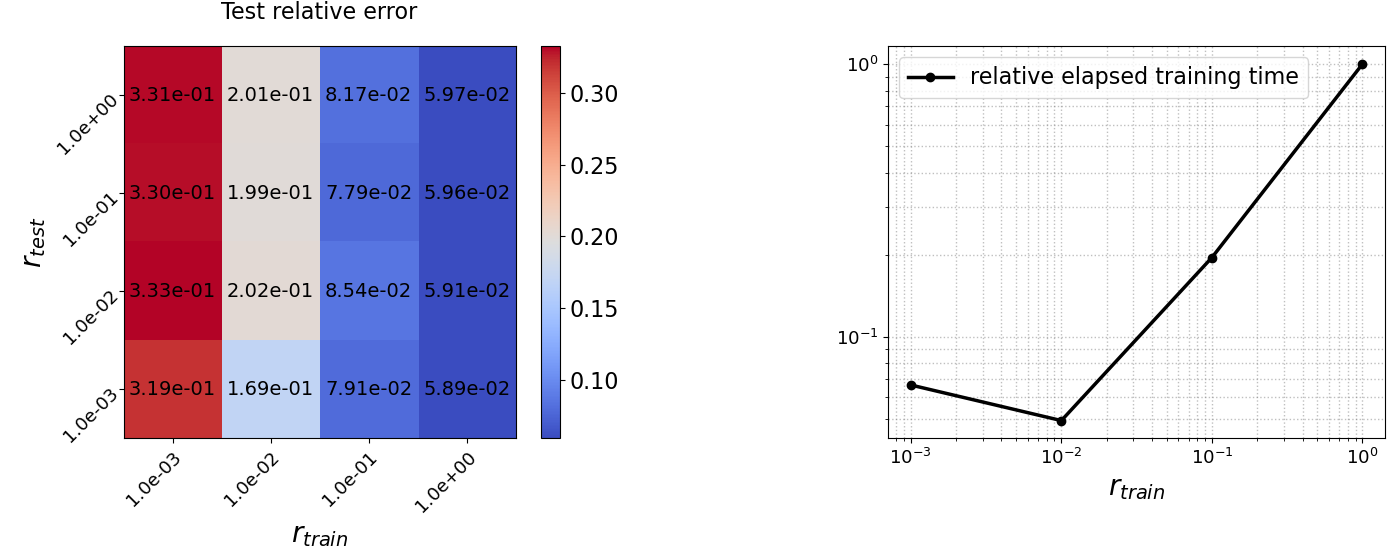}
    \caption{Heat exchanger test case: we show how training and test resolution influence the test accuracy {\em (left)}, and how the training resolution impacts on the computational training time {\em(right)}. }
    \label{fig:mixer3d-super-resolution-study}
\end{figure}

\begin{figure}[h!]
    \centering
    \subfloat[$r_{train} = 10^{-3}$]{
    \centering
    \includegraphics[width=0.24\linewidth]{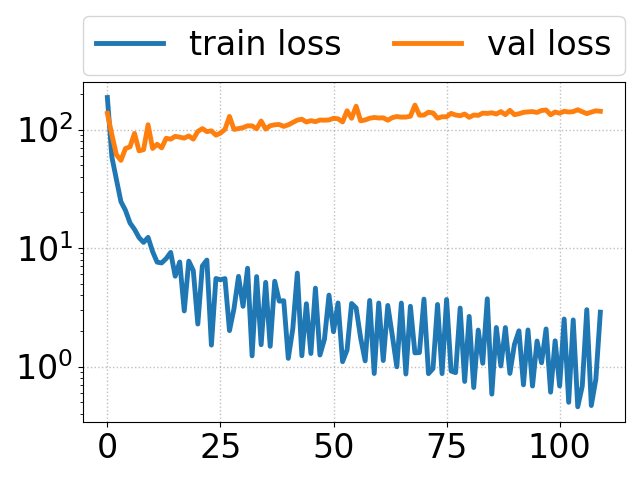}
    }
    \subfloat[$r_{train} = 10^{-2}$]{
    \centering
    \includegraphics[width=0.24\linewidth]{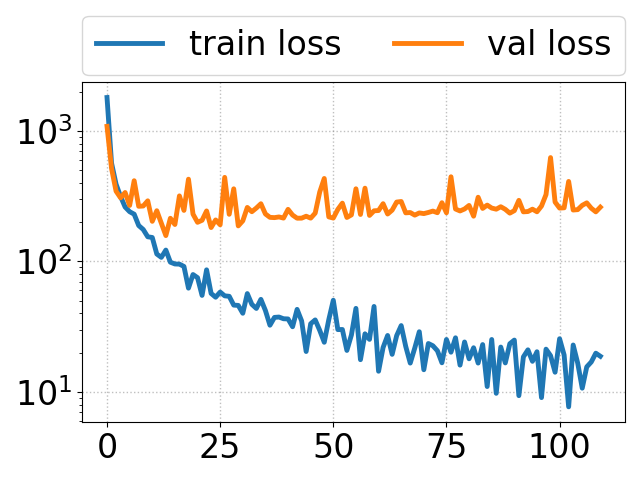}
    }
    \subfloat[$r_{train} = 10^{-1}$]{
    \centering
    \includegraphics[width=0.24\linewidth]{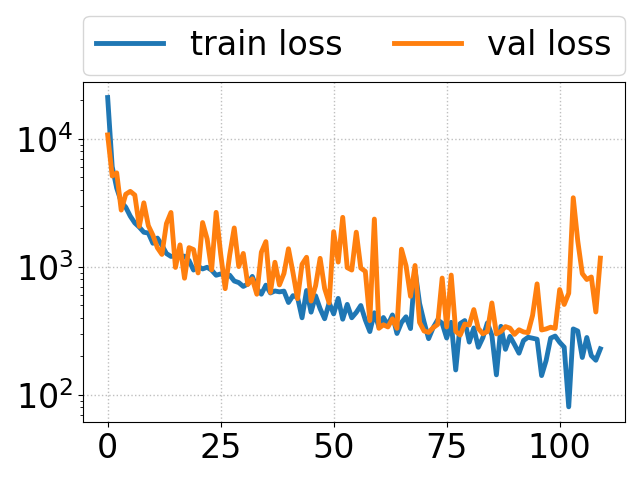}
    }
    \subfloat[$r_{train} = 10^{0}$]{
    \centering
    \includegraphics[width=0.24\linewidth]{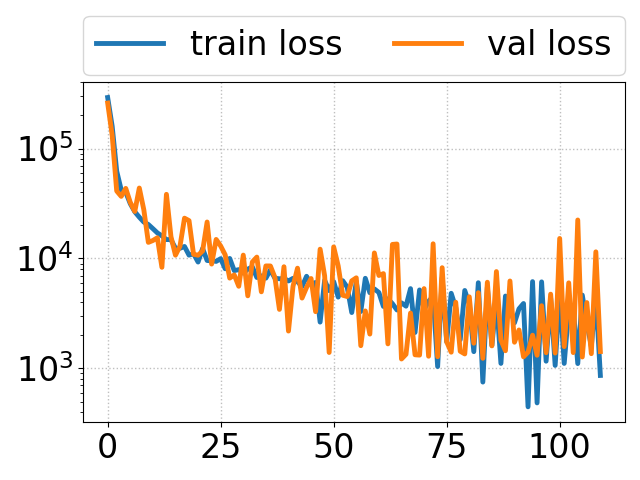}
    }
    \caption{Heat exchanger test case: training and validation loss decay for different values of the training resolution ratio $r_{train}$.}
    \label{fig:mixer-3d-loss-decay-various-resolutions}
\end{figure}

We train the architecture for a total of $110$ epochs with a learning rate of $10^{-3}$ and a batch size of $5$. Then, we test the trained CGA-DL-ROM on a test dataset consisting of unseen geometries, thus obtaining an accuracy of $\mathcal{E}_R = 5.97 \times 10^{-2}$. The qualitative results displayed in Fig. \ref{fig:mixer3d-visualization} further validate the approximation and generalization capabilities of CGA-DL-ROMs: indeed the proposed architecture is able to suitably capture the moving fronts of the temperature due to the imposed advection field. Nonetheless, we emphasize that the variability of the solution manifold is correctly reproduced as CGA-DL-ROM is capable of grasping the strong effect of the geometrical parametrization and the diffusive/advective behaviour.

Then, we emphasize that, thanks to its space-continuous architecture, CGA-DL-ROMs can be seamlessly trained and tested on different spatial resolutions. This property is convenient from a computational viewpoint as it allows us to train using data with a coarse resolution, thus alleviating the training computational burden (cf. Fig. \ref{fig:mixer3d-super-resolution-study}), and to evaluate it at a finer resolution. If the mismatch between the training and test resolutions does not result in a loss of accuracy, the method is said to have zero-shot super-resolution capabilities \cite{li2020fourier,kovachki2023neural}. Aiming at investigating such property, in our experimental setup, we vary the training and testing resolution by randomly extracting a fraction ($r_{train}, r_{test} \in (0,1]$) of the available dofs. Our experimental results (see Fig. \ref{fig:mixer3d-super-resolution-study}) demonstrate that CGA-DL-ROMs are capable of super-resolution because, for a fixed training resolution, the test error is independent of the test resolution. On the other hand, as expected, a finer training resolution fosters the convergence to more adequate minima in the loss functional. Indeed, we experimentally assess that a small training resolution prevents overfitting phenomena (cf. Fig. \ref{fig:mixer-3d-loss-decay-various-resolutions}), ultimately reducing the so-called generalization gap and resulting in an enhanced test accuracy. In conclusion, we advise domain practitioners to suitably calibrate the trade-off between the computational burden of the training phase and the desired test accuracy depending on the problem at hand.

\section*{Conclusion}
Within this work, we design and analyze the CGA-DL-ROM architecture to tackle problems featuring geometrical variability and parametrized domain shapes. Aside from the already investigated DL-ROM core, consisting of a nonlinear autoencoder and a reduced network, the main novelty of CGA-DL-ROMs consists of their characteristic CGA basis functions that make the proposed method:
\begin{itemize}
    \item \textit{space-continuous}: we cast CGA-DL-ROM in an infinite-dimensional framework, allowing for \textit{multi-resolution} datasets, which are quite common for problems featuring geometrical variability. Moreover, we complement the proposed framework with an abstract analysis of geometrically parametrized problems, and a characterization of the main properties, including the well-posedness of the dimensionality reduction problem;
    \item \textit{geometry-aware}: we inform such basis functions with the underlying geometrical parameters, thus creating a suitable inductive bias that strengthens the compressive capabilities, thus allowing domain practitioners to design light architectures with competitive prediction accuracy.
\end{itemize}

We showcase the versatility of the proposed framework through a series of numerical tests encompassing a diverse range of geometrically parametrized problems featuring generic \textit{multi-resolution} datasets, thus validating  CGA-DL-ROMs' remarkable approximation and generalization capabilities. Then, encouraged by the promising results of this novel framework on industrial benchmarks (cf. NS obstacle and Heat Exchanger test cases), in the future we aim at testing our proposed framework with real-world applications of even larger scale. Nonetheless, other lines of research may entail the analysis and the design of an extension of CGA-DL-ROM to deal with {\em (i)} either varying geometries on time, or {\em (ii)} non-parametric geometrical variability, where the domain shapes are parametrized by a set of unknown (possibly infinite-dimensional) geometrical parameters. Finally, we remark that the characterization of CGA basis functions is not limited to their use in the contest of DL-ROMs, and can be further extended to different architectures: this topic may constitute an additional, promising direction for future research.

\medskip

\section*{Acknowledgments}
The authors are members of the Gruppo Nazionale Calcolo Scientifico-Istituto Nazionale di Alta Matematica (GNCS-INdAM) and acknowledge the project “Dipartimento di Eccellenza” 2023-2027, funded by MUR. SF acknowledges the Istituto Nazionale di Alta Matematica (INdAM) for the financial support received through the “Concorso a n. 45 mensilità di borse di studio per l’estero per l’a.a. 2023-2024”. SB acknowledges the support of European Union - NextGenerationEU within the Italian PNRR program (M4C2, Investment 3.3) for the PhD Scholarship "Physics-informed data augmentation for machine learning applications". AM acknowledges the PRIN 2022 Project “Numerical approximation of uncertainty quantification problems for PDEs by multi-fidelity methods (UQ-FLY)” (No. 202222PACR), funded by the European Union - NextGenerationEU, and the Project “Reduced Order Modeling and Deep Learning for the real- time approximation of PDEs (DREAM)” (Starting Grant No. FIS00003154), funded by the Italian Science Fund (FIS) - Ministero dell'Università e della Ricerca. AM and SF acknowledge the project FAIR (Future Artificial Intelligence Research), funded by the NextGenerationEU program within the PNRR-PE-AI scheme (M4C2, Investment 1.3, Line on Artificial Intelligence). The authors thank Nicola R. Franco for the insightful discussions about parametric low-rank approximation.

\medskip
\printbibliography 

\newpage
\begin{appendices}

\section{Well-posedness of geometrically parametrized problems}
\label{sec:appendix-well-posedness}
We stress that the proof of well-posedness of problem \eqref{eq:general_formulation_reference} is case-dependent and goes beyond the purpose of the present work: we refer the reader to \cite{quarteroni2016redb} for more details on the subject.
For the sake of simplicity, here we assume that there exists a unique classical solution such that
\begin{equation*}
    \sup_{\tilde{\bx} \in \Omegaref} |\tilde{u}(\tilde{\bx}; t_1, \muv_1, \xiv_1) - \tilde{u}(\tilde{\bx}; t_2, \muv_2, \xiv_2)| \le \tilde{L} \|(t_1, \muv_1, \xiv_1) -(t_2, \muv_2, \xiv_2)\|  ,
\end{equation*}
for $\tilde{L} > 0$ and for any $(t_1, \muv_1, \xiv_1), (t_2, \muv_2, \xiv_2) \in \mathcal{P} \times \mathcal{T} \times \mathcal{G}$. Then, we employ a push-forward argument to recover the well-posedness of the original formulation entailed by problem \eqref{eq:general_formulation}. In particular, 
\begin{itemize}
    \item $\bx \mapsto u(\bx; t, \muv, \xiv)$ is continuous. Letting $L_Z = \max_{\xiv \in \mathcal{G}} Lip(Z(\cdot; \xiv))$ and choosing an arbitrary $\bx_1 \in \Omega(\xiv)$, it is straightforward to prove that 
    \begin{equation*}
        \forall \varepsilon > 0 \quad \exists \delta > 0 \quad : \quad \|\bx_1 - \bx_2\| < \delta/L_Z \quad \mbox{ such that } \quad |u(\bx_1; t, \muv, \xiv) - u(\bx_2; t, \muv, \xiv)| < \varepsilon
    \end{equation*}
    independently of $t, \muv, \xiv$. Indeed, since $\bx_i = Z^{-1}(\tilde{\bx}_i; \xiv)$ for $i=1,2$, we have
    \begin{equation*}
    \begin{aligned}
        |u(\bx_1; t, \muv, \xiv) - u(\bx_2; t, \muv, \xiv)| &= |u(Z^{-1}(\tilde{\bx}_1; \xiv); t, \muv, \xiv) - u(Z^{-1}(\tilde{\bx}_2; \xiv); t, \muv, \xiv)| \\
        & =|\tilde{u}(\tilde{\bx}_1; t, \muv, \xiv) - \tilde{u}(\tilde{\bx}_2; t, \muv, \xiv)|.
    \end{aligned}
    \end{equation*}
    Since $\|\tilde{\bx}_1 - \tilde{\bx}_2\| = \|Z(\bx_1; \xiv) - Z(\bx_2; \xiv)\| \le L_Z \|\bx_1 - \bx_2\| < \delta$, we conclude by continuity of $\bx \mapsto \tilde{u}(\bx; t, \muv, \xiv)$. 

    \item $(t,\muv,\xiv) \mapsto u(t,\muv,\xiv)$ is Lipschitz. Indeed, given an arbitrary choice of $(t_1, \muv_1, \xiv_1), (t_2, \muv_2, \xiv_2) \in \mathcal{P} \times \mathcal{T} \times \mathcal{G}$ and identifying  $\bx_1 = Z^{-1}(\tilde{\bx}; \xiv_1)$, $\bx_2 = Z^{-1}(\tilde{\bx}; \xiv_2)$ for any $\tilde{\bx} \in \Omegaref$, we have
    \begin{equation*}
    \begin{aligned}
        \sup_{\substack{\bx_1 \in \Omega(\xiv_1) \\ \bx_2 \in \Omega(\xiv_2)}} |u(\bx_1; t_1, \muv_1, \xiv_1) - u(\bx_2; t_2, \muv_2, \xiv_2)|  &= \sup_{\tilde{\bx} \in \Omegaref} |u(Z^{-1}(\tilde{\bx}; \xiv_1) ; t_1, \muv_1, \xiv_1) - u(Z^{-1}(\tilde{\bx}; \xiv_2); t_2, \muv_2, \xiv_2)|  \\
        &= \sup_{\tilde{\bx} \in \Omegaref} | \tilde{u}(\tilde{\bx} ; t_1, \muv_1, \xiv_1) - \tilde{u}(\tilde{\bx} ; t_2, \muv_2, \xiv_2)|  \\
        &\le \tilde{L} \|(t_1, \muv_1, \xiv_1) -(t_2, \muv_2, \xiv_2)\|,
    \end{aligned}
    \end{equation*}
    which ensures continuous dependence from data.

\end{itemize}

\section{Analysis of the morphing operators}
\label{sec:appendix-morphing}
\begin{lemma}
It is possible to prove that
\begin{equation*}
\begin{array}{rlccccl}
    &\mathcal{Z}_{\xiv} &: &L^2(\Omegaref) \hspace{0.3cm} &\rightarrow &L^2_\zeta(\Omega(\xiv)) &\mbox{ as } \tilde{f} \mapsto \tilde{f} \circ Z(\xiv) \\
    &\mathcal{Z}_{\xiv}^{-1} &: &L^2_\zeta(\Omega(\xiv)) &\rightarrow &L^2(\Omegaref) &\mbox{ as } f \mapsto f \circ Z^{-1}(\xiv),
\end{array}
\end{equation*} 
are linear bounded functionals.
\end{lemma}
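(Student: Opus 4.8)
The plan is to establish the two defining properties separately. First I would verify linearity, which is immediate from the fact that both operators act by composition with a fixed map: since composition distributes over linear combinations, for any $\tilde{f}, \tilde{g} \in L^2(\Omegaref)$ and scalars $\alpha,\beta$ one has $\mathcal{Z}_{\xiv}(\alpha \tilde{f} + \beta \tilde{g}) = \alpha\,\mathcal{Z}_{\xiv}(\tilde{f}) + \beta\,\mathcal{Z}_{\xiv}(\tilde{g})$, and symmetrically for $\mathcal{Z}_{\xiv}^{-1}$.

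For boundedness, the key idea is that the change of variable formula \eqref{eq:change-of-variable} already established in \S \ref{subsec:dim-red-setting} shows these maps to be \emph{isometries}, which is stronger than mere boundedness. Concretely, for $\mathcal{Z}_{\xiv}^{-1}$ I would compute the target norm of $\mathcal{Z}_{\xiv}^{-1}(f) = f \circ Z^{-1}(\xiv)$ and recognize, via \eqref{eq:change-of-variable}, that
\[
\|\mathcal{Z}_{\xiv}^{-1}(f)\|^2_{L^2(\Omegaref)} = \int_{\Omegaref} f(Z^{-1}(\tilde{\bx},\xiv))^2\, d\tilde{\bx} = \int_{\Omega(\xiv)} f(\bx)^2 \zeta(\bx,\xiv)\, d\bx = \|f\|^2_{L^2_\zeta(\Omega(\xiv))}.
\]
For $\mathcal{Z}_{\xiv}$ the argument is the mirror image: substituting $\tilde{\bx} = Z(\bx,\xiv)$, so that $d\tilde{\bx} = \zeta(\bx,\xiv)\,d\bx$ by the very definition of the Jacobian $\zeta$, the weight is absorbed and one obtains $\|\mathcal{Z}_{\xiv}(\tilde{f})\|_{L^2_\zeta(\Omega(\xiv))} = \|\tilde{f}\|_{L^2(\Omegaref)}$. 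Both operators therefore have operator norm equal to one, and are in fact mutually inverse isometric isomorphisms.

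The only point that needs care, and which I would address before the norm computation, is well-definedness: I must check that the image of a square-integrable function is again measurable and square-integrable in the target space. Measurability is inherited from the continuity of $Z(\cdot;\xiv)$ and $Z^{-1}(\cdot;\xiv)$, guaranteed by Assumption \ref{assumption:diffeomorphism} since $Z$ is a $C^r$-diffeomorphism, while finiteness of the target norm is exactly what the isometry identities above certify. I do not expect a genuine obstacle here --- the proof reduces to a single application of \eqref{eq:change-of-variable} in each direction --- the only subtlety being to keep track of the fact that the strict positivity of $\zeta$ (again a consequence of $Z$ being a diffeomorphism) is what makes the weighted integral a genuine norm and the substitution invertible.
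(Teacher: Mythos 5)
Your proposal is correct and follows essentially the same route as the paper's proof in Appendix B: linearity from the definition, and boundedness via the change of variable formula \eqref{eq:change-of-variable}, which shows both operators preserve the norm (the paper phrases this as the operator norm being at most one, while you explicitly note the stronger isometry property). Your additional remarks on well-definedness and the positivity of $\zeta$ are sensible but not present in, nor needed beyond, the paper's argument.
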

\begin{proof}
Let $\xiv \in \mathcal{G}$. Linearity of both $\mathcal{Z}_{\xiv}$ and $\mathcal{Z}_{\xiv}^{-1}$ is trivial to prove thanks to their definition. Then, owing to the change of variable formula \eqref{eq:change-of-variable}, we have that
\begin{itemize}
    \item $\mathcal{Z}_{\xiv}$ is bounded. Indeed,
    \begin{equation*}
        \|\mathcal{Z}_{\xiv}\|_{\star} = \sup_{\substack{\tilde{f} \in L^2(\Omegaref): \\\|\tilde{f}\|_{L^2(\Omegaref)} \not= 0}} \frac{\|\mathcal{Z}_{\xiv}(\tilde{f})\|_{L^2(\Omega(\xiv))}}{\|\tilde{f}\|_{L^2(\Omegaref)}} = \sup_{\substack{f \in L^2(\Omega(\xiv)): \\\|f\|_{L^2(\Omega(\xiv))} \not= 0}} \frac{\|\tilde{f}\|_{L^2(\Omegaref)}}{\|\tilde{f}\|_{L^2(\Omegaref)}} \le 1.
    \end{equation*}
    \item $\mathcal{Z}_{\xiv}^{-1}$ is bounded. Indeed,
    \begin{equation*}
        \|\mathcal{Z}_{\xiv}^{-1}\|_{\star} = \sup_{\substack{f \in L_{\zeta}^2(\Omega(\xiv)): \\\|f\|_{L_{\zeta}^2(\Omega(\xiv))} \not= 0}} \frac{\|\mathcal{Z}_{\xiv}^{-1}(f)\|_{L^2(\Omegaref)}}{\|f\|_{L_{\zeta}^2(\Omega(\xiv))}} = \sup_{\substack{f \in L_{\zeta}^2(\Omega(\xiv)): \\\|f\|_{L_{\zeta}^2(\Omega(\xiv))} \not= 0}} \frac{\|f\|_{L_{\zeta}^2(\Omega(\xiv))}}{\|f\|_{L_{\zeta}^2(\Omega(\xiv))}} \le 1.
    \end{equation*}
\end{itemize}
\end{proof}

\section{Supplementary proofs}
\label{sec:appendix-proofs}

\subsection{Proof of Lemma \ref{lemma:CGA-functional}}
\begin{proof}
    The proof follows similar arguments as the proof of \textit{Lemma 2.2} and \textit{Theorem 4.2} of \cite{franco2024measurabilitycontinuityparametriclowrank} and is organized into three parts:
    \begin{itemize}
        \item[(i)] $B$ is weakly compact since $L^2(\tilde{\Omega})$ is reflexive. Then, it is possible to endow $B$ with a metric $d_B$ that makes it compatible with the weak topology (see \cite{brezis2010functional}, Proposition (3.29)).
        \item[(ii)] Moreover, we show that $\mathcal{J}_{CGA}:\mathcal{G} \times [B]^N \rightarrow \mathbb{R}$ is weakly lower semi-continuous with respect to the product topology on $\mathcal{G} \times [B]^N$.
        \item[(iii)] Since a weakly lower semi-continuous functional attains its infimum on a weakly compact metric space \cite{sahney1982best}, we conclude.
    \end{itemize}
    We are left to prove that $\mathcal{J}_{CGA}:\mathcal{G} \times [B]^N \rightarrow \mathbb{R}$ is lower semi-continuous. To this end, consider  the sequences $\xiv^{(k)} \rightarrow \xiv$ and $w_n^{(k)} \rightharpoonup w_n$, for $n=1,\ldots,N$. Now, for the sake of readability we omit the dependencies of $u$ on $\muv,t$. Thanks to $\xiv \mapsto u(\xiv)$ being Lipschitz-continuous and $\mathcal{Z}_{\xiv}^{-1}$ being bounded for any $\xiv \in \mathcal{G}$, then $\xiv \mapsto \tilde{u}(\xiv) = \mathcal{Z}_{\xiv}^{-1}(u(\xiv))$ is continuous. Thus, we obtain $\tilde{u}(\xiv^{(k)}) \rightarrow \tilde{u}(\xiv)$ and $\tilde{u}(\xiv^{(k)}) \rightharpoonup \tilde{u}(\xiv)$. Then, it is straightforward to see that for any $n=1,\ldots,N$,
    \begin{itemize}
        \item $(\tilde{u}(\xiv^{(k)}),w_n^{(k)})_{L^2(\Omegaref)} \rightarrow (\tilde{u}(\xiv),w_n)_{L^2(\Omegaref)}$, since
        \begin{equation*}
        \begin{aligned}
            &(\tilde{u}(\xiv^{(k)}),w_n^{(k)})_{L^2(\Omegaref)} -(\tilde{u}(\xiv),w_n)_{L^2(\Omegaref)} =\\
            & = (\tilde{u}(\xiv^{(k)}),w_n^{(k)})_{L^2(\Omegaref)} - (\tilde{u}(\xiv),w_n^{(k)})_{L^2(\Omegaref)} + (\tilde{u}(\xiv),w_n^{(k)}) - (\tilde{u}(\xiv),w_n)_{L^2(\Omegaref)} \\
            &\le \|w_n^{(k)}\|_{L^2(\Omegaref)} \|\tilde{u}(\xiv^{(k)}) - \tilde{u}(\xiv)\|_{L^2(\Omegaref)} + (\tilde{u}(\xiv), w_n^{(k)} - w_n)_{L^2(\Omegaref)}\rightarrow 0
        \end{aligned}
        \end{equation*}
        thanks to $\{w_n^{(k)}\}_k$ being weak convergent (thus bounded).
        \item $(\tilde{u}(\xiv^{(k)}),w_n^{(k)})_{L^2(\Omegaref)}w_n^{(k)} \rightharpoonup (\tilde{u}(\xiv),w_n)_{L^2(\Omegaref)}w_n$, since for any $z \in L^2(\Omegaref)$
        \begin{equation*}
        \begin{aligned}
            &((\tilde{u}(\xiv^{(k)}),w_n^{(k)})_{L^2(\Omegaref)}w_n^{(k)},z)_{L^2(\Omegaref)} =\\
            &=  (\tilde{u}(\xiv^{(k)}),w_n^{(k)})_{L^2(\Omegaref)}(w_n^{(k)},z)_{L^2(\Omegaref)}  \rightarrow (\tilde{u}(\xiv),w_n)_{L^2(\Omegaref)} (w_n^,z)_{L^2(\Omegaref)} = ((\tilde{u}(\xiv),w_n)_{L^2(\Omegaref)}w_n,z)_{L^2(\Omegaref)}
        \end{aligned}
        \end{equation*}
    \end{itemize}
    Thus, we have that 
    \begin{equation*}
        \sum_{n=1}^{N} (\tilde{u}(\xiv^{(k)}),w_n^{(k)})_{L^2(\Omegaref)}w_n^{(k)} \rightharpoonup \sum_{n=1}^N (\tilde{u}(\xiv),w_n)_{L^2(\Omegaref)}w_n
    \end{equation*}
    and by lower semi-continuity of $\|\cdot\|_{L^2(\Omegaref)}$ with respect to the weak topology it is possible to prove that
    \begin{equation*}
        \liminf_{k\rightarrow \infty} \| \tilde{u}(\xiv^{(k)}) - \sum_{n=1}^{N} (\tilde{u}(\xiv^{(k)}),w_n^{(k)})_{L^2(\Omegaref)}w_n^{(k)} \|_{L^2(\Omegaref)} \ge \| \tilde{u}(\xiv) - \sum_{n=1}^{N} (\tilde{u}(\xiv),w_n)_{L^2(\Omegaref)}w_n \|_{L^2(\Omegaref)},
    \end{equation*}
    which concludes the proof.
\end{proof}

\subsection{Proof of Proposition \ref{prop:nn-appx}}
\begin{proof}
Fix $N < \infty$. Define $\Omega_{\mathcal{G}} := \overline{\cup_{\xiv \in \mathcal{G}} \Omega(\xiv)}$ and
\begin{equation*}
    \varepsilon^* := \min\bigg\{2N\|\mathcal{Z}_{\xiv}^{-1}(u)\|_{L^2(\mathcal{T} \times \mathcal{P} \times \mathcal{G}; L^2(\Omegaref))},\frac{1}{(2 \mathsf{BAE}_{CGA}(N)^{1/2})}\bigg\}.
\end{equation*} 
Then, for any $\xiv \in \mathcal{G}$, we define the zero-extension operator $\epsilon_{\xiv}$ as
\begin{equation*}
    \epsilon_{\xiv} (\mathcal{Z}_{\xiv}(v_n^{CGA}(x,\xiv))):=
    \left\{
    \begin{array}{ll}
         \mathcal{Z}_{\xiv}(v_n^{CGA}(\bx,\xiv)), & \bx \in \Omega(\xiv) \\
         0, &\bx \in \Omega_{\mathcal{G}} \setminus \Omega(\xiv),
    \end{array}
    \right.
\end{equation*}
for any $n=1,\ldots,N$. We immediately notice that $\epsilon_{\xiv} (\mathcal{Z}_{\xiv}(v_n^{CGA})) \in W^{1,2}(\Omega_{\mathcal{G}}) \times \mathcal{G}$. Indeed, setting $m = \min_{\xiv \in \mathcal{G}} \min_{\bx \in \Omega(\xiv)} \zeta(\bx;\xiv) > 0$, we have
\begin{equation*}
\begin{aligned}
      m \|\epsilon_{\xiv}(\mathcal{Z}_{\xiv}(w))\|_{L^2(\Omega_{\mathcal{G}} \times \mathcal{G})}^2 &=
      m \int_{\Omega_{\mathcal{G}} \times \mathcal{G}} \epsilon_{\xiv}(\mathcal{Z}_{\xiv}(w(\bx;\xiv)))^2 d\bx d\xiv \\
      &= m \int_{\mathcal{G}} \int_{\Omega(\xiv)} \epsilon_{\xiv}(\mathcal{Z}_{\xiv}(w(\bx;\xiv)))^2 d\bx d\xiv \\
      &\le \int_{\mathcal{G}} \int_{\Omega(\xiv)} \zeta(\bx;\xiv) \mathcal{Z}_{\xiv}(w(\bx;\xiv))^2 d\bx d\xiv \\
      &= \int_{\mathcal{G}} \int_{\Omegaref}  w(\tilde{\bx};\xiv)^2 d\tilde{\bx} d\xiv = \|w\|_{L^2(\Omegaref \times \mathcal{G})}^2 < \infty,
\end{aligned}
\end{equation*}
where $w$ stands for either $v_n^{CGA}$ or its weak derivative.
Then, thanks to G\"urhing's Theorem \cite{guhring2021appx}, there exists a set of neural networks $(\bx,\xiv) \mapsto \{\hat{v}^{CGA}_n(\bx, \xiv)\}_{n=1}^N$ such that
\begin{equation*}
    \|\hat{v}^{CGA}_n - \epsilon_{\xiv} (\mathcal{Z}_{\xiv}(v_n^{CGA})\|_{L^2(\Omega_{\mathcal{G}} \times \mathcal{G})} < \frac{\varepsilon^2}{2MN\|\mathcal{Z}_{\xiv}^{-1}(u)\|_{L^2(\mathcal{T} \times \mathcal{P} \times \mathcal{G}; L^2(\Omegaref))}}.
\end{equation*}
where $M = \sup_{\xiv \in \mathcal{G}} \sup_{\bx \in \Omega(\xiv)} \zeta(\bx;\xiv)$. We note that
\begin{equation*}
\begin{aligned}
     M\|\hat{v}^{CGA}_n - \epsilon_{\xiv} (\mathcal{Z}_{\xiv}(v_n^{CGA}))\|_{L^2(\Omega_{\mathcal{G}} \times \mathcal{G})} &= M\|\hat{v}^{CGA}_n - \epsilon_{\xiv} (\mathcal{Z}_{\xiv}(v_n^{CGA})\|_{L^2(\mathcal{G}; L^2(\Omega_{\mathcal{G}})) }\\
     &\ge  \|\hat{v}^{CGA}_n - \epsilon_{\xiv} (\mathcal{Z}_{\xiv}(v_n^{CGA})\|_{L^2(\mathcal{G}; L_{\zeta}^2(\Omega(\xiv)))} \\
     & =  \|\hat{v}^{CGA}_n - \mathcal{Z}_{\xiv}(v_n^{CGA})\|_{L^2(\mathcal{G}; L_{\zeta}^2(\Omega(\xiv)))} \\
     & =  \|\mathcal{Z}_{\xiv}^{-1}(\hat{v}^{CGA}_n) - v_n^{CGA}\|_{L^2(\mathcal{G}; L^2(\Omegaref))} \\
     & = \|\mathcal{Z}_{\xiv}^{-1}(\hat{v}^{CGA}_n) - v_n^{CGA}\|_{L^2(\Omegaref \times \mathcal{G})},
\end{aligned}
\end{equation*}
so that
\begin{equation*}
    \|\mathcal{Z}_{\xiv}^{-1}(\hat{v}^{CGA}_n) - v_n^{CGA}\|_{L^2(\Omegaref \times \mathcal{G})} < \frac{\varepsilon^2}{2N\|\mathcal{Z}_{\xiv}^{-1}(u)\|_{L^2(\mathcal{T} \times \mathcal{P} \times \mathcal{G}; L^2(\Omegaref))}}.
\end{equation*}
The whole architecture is obtained by stacking together the neural networks $\{\hat{v}^{CGA}_n\}_{n=1}^N$. Thus, by employing in sequence the lower triangular inequality, the triangular inequality, and the Cauchy-Schwarz inequality, we obtain
\begin{equation*}
\begin{aligned}
&\bigg[\int_{\mathcal{G}} \mathcal{J}_{CGA}(\xiv, \{\mathcal{Z}^{-1}_{\xiv}(\hat{v}^{CGA}_n(\xiv))\}_{n=1}^{N}) \bigg]^{1/2} - \bigg[\mathsf{BAE}_{CGA}(N)\bigg]^{1/2} = \\
&\overset{def}{=} \bigg\|\mathcal{Z}_{\xiv}^{-1}(u) - \sum_{n=1}^N(\mathcal{Z}_{\xiv}^{-1}(u),\mathcal{Z}_{\xiv}^{-1}(\hat{v}^{CGA}_n))_{L^2(\Omegaref)} \mathcal{Z}_{\xiv}^{-1}(\hat{v}^{CGA}_n)\bigg\|_{L^2(\mathcal{T} \times    \mathcal{P} \times \mathcal{G}; L^2(\Omegaref))} - \\
&\hspace{3.5cm} - \bigg\|\mathcal{Z}_{\xiv}^{-1}(u) - \sum_{n=1}^N(\mathcal{Z}_{\xiv}^{-1}(u),\mathcal{Z}_{\xiv}^{-1}(v^{CGA}_n))_{L^2(\Omegaref)} \mathcal{Z}_{\xiv}^{-1}(v^{CGA}_n)\bigg\|_{L^2(\mathcal{T} \times    \mathcal{P} \times \mathcal{G}; L^2(\Omegaref))}  \\
& \overset{LTI}{\le}  \bigg\|\sum_{n=1}^N \bigg[(\mathcal{Z}_{\xiv}^{-1}(u),\mathcal{Z}_{\xiv}^{-1}(\hat{v}^{CGA}_n))_{L^2(\Omegaref)} \mathcal{Z}_{\xiv}^{-1}(\hat{v}^{CGA}_n) \\
& \hspace{3.5cm} - (\mathcal{Z}_{\xiv}^{-1}(u),\mathcal{Z}_{\xiv}^{-1}(v^{CGA}_n))_{L^2(\Omegaref)} \mathcal{Z}_{\xiv}^{-1}(v^{CGA}_n) \bigg]\bigg\|_{L^2(\mathcal{T} \times    \mathcal{P} \times \mathcal{G}; L^2(\Omegaref))} \\
& \hspace{1.5mm} = \bigg\|\sum_{n=1}^N \bigg[(\mathcal{Z}_{\xiv}^{-1}(u),\mathcal{Z}_{\xiv}^{-1}(\hat{v}^{CGA}_n - v^{CGA}_n))_{L^2(\Omegaref)} \mathcal{Z}_{\xiv}^{-1}(\hat{v}^{CGA}_n) \\
& \hspace{3.5cm}- (\mathcal{Z}_{\xiv}^{-1}(u),\mathcal{Z}_{\xiv}^{-1}(v^{CGA}_n))_{L^2(\Omegaref)} \mathcal{Z}_{\xiv}^{-1}(v^{CGA}_n - \hat{v}^{CGA}_n) \bigg]\bigg\|_{L^2(\mathcal{T} \times    \mathcal{P} \times \mathcal{G}; L^2(\Omegaref))} \\
& \overset{TI}{\le} \sum_{n=1}^N \bigg\| (\mathcal{Z}_{\xiv}^{-1}(u),\mathcal{Z}_{\xiv}^{-1}(\hat{v}^{CGA}_n - v^{CGA}_n))_{L^2(\Omegaref)} \mathcal{Z}_{\xiv}^{-1}(\hat{v}^{CGA}_n) \bigg\|_{L^2(\mathcal{T} \times    \mathcal{P} \times \mathcal{G}; L^2(\Omegaref))} + \\\
& \hspace{3.5cm} \bigg\| (\mathcal{Z}_{\xiv}^{-1}(u),\mathcal{Z}_{\xiv}^{-1}(v^{CGA}_n))_{L^2(\Omegaref)} \mathcal{Z}_{\xiv}^{-1}(v^{CGA}_n - \hat{v}^{CGA}_n) \bigg\|_{L^2(\mathcal{T} \times    \mathcal{P} \times \mathcal{G}; L^2(\Omegaref))} \\
& \overset{CS}{\le} \sum_{n=1}^N \|\mathcal{Z}_{\xiv}^{-1}(u)\|_{L^2(\mathcal{T} \times \mathcal{P} \times \mathcal{G}; L^2(\Omegaref))} \|v^{CGA}_n -\hat{v}^{CGA}_n\|_{L^2(\Omegaref \times \mathcal{G})}\bigg[\|v^{CGA}_n\|_{L^2(\Omegaref \times \mathcal{G})} + \|\hat{v}^{CGA}_n\|_{L^2(\Omegaref \times \mathcal{G})}\bigg]
\end{aligned}
\end{equation*}
Now, we observe that
\begin{equation*}
\begin{aligned}
    \|\hat{v}^{CGA}_n\|_{L^2(\Omegaref \times \mathcal{G})} &\le \|v^{CGA}_n -\hat{v}^{CGA}_n\|_{L^2(\Omegaref \times \mathcal{G})} + \|v^{CGA}_n\|_{L^2(\Omegaref \times \mathcal{G})} \\
    &< \frac{\varepsilon^2}{2N\|\mathcal{Z}_{\xiv}^{-1}(u)\|_{L^2(\mathcal{T} \times \mathcal{P} \times \mathcal{G}; L^2(\Omegaref))}} + 1 \\
    &\le \frac{(\varepsilon^*)^2}{2N\|\mathcal{Z}_{\xiv}^{-1}(u)\|_{L^2(\mathcal{T} \times \mathcal{P} \times \mathcal{G}; L^2(\Omegaref))}} + 1 \le 2,
\end{aligned}
\end{equation*}
which allows us to obtain the bound
\begin{equation*}
    \bigg[\int_{\mathcal{G}} \mathcal{J}_{CGA}(\xiv, \{\mathcal{Z}^{-1}_{\xiv}(\hat{v}^{CGA}_n(\xiv))\}_{n=1}^{N}) \bigg]^{1/2} - \bigg[\mathsf{BAE}_{CGA}(N)\bigg]^{1/2} < \varepsilon^2.
\end{equation*}
By isolating the first addendum of the l.h.s. and by squaring both sides of the latter equation, we can conclude
\begin{equation*}
    \int_{\mathcal{G}} \mathcal{J}_{CGA}(\xiv, \{\mathcal{Z}^{-1}_{\xiv}(\hat{v}^{CGA}_n(\xiv))\}_{n=1}^{N})  < \varepsilon^4 + 2 \varepsilon^2  \bigg[\mathsf{BAE}_{CGA}(N)\bigg]^{1/2} + \mathsf{BAE}_{CGA}(N) < \varepsilon +\mathsf{BAE}_{CGA}(N),
\end{equation*}
since $\varepsilon < \varepsilon^* \le 1/(2 [\mathsf{BAE}_{CGA}(N)]^{1/2})$.
\end{proof}
\end{appendices}

\end{document}